\def\dis{\displaystyle}
\def\deg{{\rm deg}}
\def\cl{\centerline}
\def\al{\alpha}
\def\vs{\vspace*}
\def\Z{\mathbb{Z}}
\def\C{\mathbb{C}}
\def\V{{W}}
\numberwithin{equation}{section}
\newtheorem{th*}{Theorem}
\newtheorem{theo}{Theorem}[section]
\newtheorem{nota}[theo]{Notation}
\newtheorem{coro}[theo]{Corollary}
\newtheorem{lemm}[theo]{Lemma}
\newtheorem{prop}[theo]{Proposition}
\newtheorem{clai}{Claim}
\newtheorem*{clai*}{Claim}
\newtheorem{remark}[theo]{Remark}
\begin{document}
\begin{center}
{\large\bf  Submodule structures  of    $\mathbb C[s,t]$ over $W(0,b)$
and a new class of irreducible modules over the Virasoro algebra}

\end{center}
\cl{ Jianzhi Han
and Yucai Su}

\vs{8pt}
{
\parskip .01 truein
\baselineskip 5pt \lineskip 5pt

{\small \noindent {\em Abstract}: For any $a,b\in\mathbb C$, $W(a,b)$ is the Lie algebra with basis $\{L_m,M_m\,|\,m\in\mathbb Z\}$ and relations $[L_m,L_n]=(n-m)L_{m+n},$
$[L_m,W_n]=(a+n+bm)W_{m+n}$,
$[W_m,W_n]=0$ for $m,n\in\mathbb Z$.
For any $\lambda\in\mathbb C^*,$ $\alpha\in\mathbb C$, $h:=h(t)\in\mathbb C[t]$,
there exists a non-weight module
over $W(0,b)$ (resp., $W(0,1)$),
 denoted by $\Phi(\lambda,\alpha,h)$  (resp. $\Theta(\lambda,h)$),
which is
defined on the space $\mathbb C[s,t]$ of polynomials on variables $s,t$ and  is
 free of rank one over the enveloping algebra $U(\mathbb C L_0\oplus\mathbb C W_0)$ of $\mathbb C L_0\oplus\mathbb C W_0$. In the present paper, by introducing two sequences of useful operators on $\mathbb C[s,t]$,  we determine all submodules of $\mathbb C[s,t]$. We also study  submodules of $\mathbb C[s,t]$ regarded as modules over the Virasoro algebra $\mathscr V\!$ (with the trivial action of the center), and prove that these submodules are finitely generated  if and only if ${\rm deg}\,h(t)\geq1$.  In addition, it is proven that  $\Phi(\lambda, \alpha,h)$ is an irreducible $\mathscr V\!$-module if and only if  $b=-1$, ${\rm deg}\, h(t)=1$, $\alpha\neq0$.
 Finally, we obtain a large family of new irreducible modules over the Virasoro algebra $\mathscr V\!$, by taking  various tensor products of a finite number of irreducible modules $\Phi(\lambda_i,\alpha_i, h_i)$ for $\lambda_i,\alpha_i\in\mathbb C^*,$ $h_i\in\mathbb C[t]$
with an irreducible $\mathscr V\!$-module  $V$,
  where  $V$ satisfies that  there exists  a nonnegative integer $R_V$ such that $L_m$ acts locally finitely on $V$ for  $m\geq R_V$.

\bigskip
\noindent {\em Key words}: Irreducible module, $W(a,b)$-algebra,  Virasoro algebra.  \\
\bigskip
\noindent{\it Mathematics Subject Classification (2010):} 17B10, 17B65, 17B68
}\parskip .001 truein\baselineskip 6pt \lineskip 6pt

\section{Introduction}
It is well-known that non-weight modules constitute an important ingredient in the representation theory
of Lie algebras. Their study is, in some sense,  more challenging than  that of weight modules.  Recently non-weight modules
over the Virasoro algebra and some related algebras
have been
extensively studied 
 (see, e.g.,  \cite{BM,CG1,CG2,CG3,CHS,CHSY,LiuZ,LZ2,LLZ,MW,MZ,TZ1,TZ2,TZ3}),  especially those
which are  free over the enveloping algebra $U(\mathfrak h)$
of
an abelian subalgebra $\mathfrak h$ contained in some Cartan subalgebra
(see, e.g.,\cite{HCS,N1,N2,TZ3}).

Let us recall the non-weight modules studied in \cite{HCS}.
 For any pair $(a,b)$ of complex numbers, let  $\V(a,b)$ be  the Lie algebra with  basis $\{L_m, W_m\mid m\in\Z\}$ and    the following defining relations, for $m,n\in\Z$
(see, e.g., \cite{OR}),
\begin{eqnarray}\label{def-relation}
&\!\!\!\!\!\!\!\!&[L_m,L_n]=(n-m)L_{m+n},
\ \ \ \ [L_m,W_n]=(a+n+bm)W_{m+n},
\ \ \ [W_m,W_n]=0.
\end{eqnarray}
Note that 
$\V(a,b)$ includes  several  well-known infinite dimensional Lie algebras as subalgebras, such as the Witt algebra  (or the centerless Virasoro algebra) $\mathscr W\!:={\rm span}\{L_m\mid m\in\Z\}$, the centerless twisted Heisenberg-Virasoro algebra $\V(0,0)$, etc.

Fix any $\lambda\in\C^*, \al\in\C$, $h:=h(t)\in\C[t]$ (the polynomial algebra in the variable $t$ with coefficients in $\C$).  We define the following elements in  $\C[t]$, for $m\in\Z$,
\begin{eqnarray}
\label{g-hi-}
&\!\!\!\!\!\!\!\!&
g(t)=\frac{h(t)-h(\al)}{t-\al},
\nonumber\\
&\!\!\!\!\!\!\!\!&
 h_m(t)=mh(t)-m\al\Big(\delta_{b,-1}(m-1)+\delta_{b1}\Big)g(t).
\end{eqnarray}
We define the action of $W(0,b)$ on the  space $\C[s,t]$ of polynomials on variables $s,t$, which is denoted as $\Phi(\lambda,\al, h)$,
in the following way,  for $f(s,t)\in\C[s,t]$,
\begin{eqnarray}
\label{phi-module}
&\!\!\!\!\!\!\!\!\!\!\!\!\!\!\!\!&L_mf(s,t)=\lambda^m\Big(\big(s+h_m (t)\big)f(s-m,t)+bm(t-\delta_{b,-1}m\al-\delta_{b1}\al)\displaystyle \frac{\partial}{\partial t}f(s-m,t)\Big),\nonumber\\
&\!\!\!\!\!\!\!\!\!\!\!\!\!\!\!\!&W_m f(s,t)=
\lambda^m\Big(t-\delta_{b,-1}m\al-\delta_{b1}(1-\delta_{m0})\al\Big)f(s-m,t).
\end{eqnarray}
We also define the action of $W(0,1)$ on the  space $\C[s,t]$, which is now denoted as $\Theta(\lambda, h)$,
in the following way,   for $f(s,t)\in\C[s,t]$,
\begin{eqnarray}
\label{theta-module}
&\!\!\!\!\!\!\!\!\!\!\!\!\!\!\!\!&
L_m f(s,t)=\lambda^m\Big(s+mh(t)\Big)f(s-m,t),\nonumber\\
&\!\!\!\!\!\!\!\!\!\!\!\!\!\!\!\!&
W_m f(s,t)=\delta_{m0}tf(s,t).
\end{eqnarray}
The following is obtained in \cite{HCS}.
\begin{th*} \label{theo1}
\begin{itemize}
\item[\rm (1)]  $\Phi(\lambda,\al,h)$ is a $\V(0,b)$-module$,$ and
 $\Theta(\lambda,h)$ is a $\V(0,1)$-module$.$

\item[\rm (2)]  Assume $M$ is  a $\V (a,b)$-module  which is  free of rank one over the enveloping algebra $U(\C L_0 \oplus\C W_0)$. Then $a=0$ and $M\cong \Phi(\lambda,\al,h)$ or $\Theta(\lambda,h)$ if $b=1,$ or
 $M\cong\Phi(\lambda,\al,h)$  if $b\neq 1,$ for some $\alpha\in\C, \lambda\in \C^*,$ $h\in\C[t].$
 \end{itemize}
\end{th*}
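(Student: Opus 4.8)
\medskip
\noindent\emph{Proposed approach.}

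For part (1), the plan is a direct verification that the two prescribed actions obey the defining relations of $\V(0,b)$, respectively $\V(0,1)$. Because $C_1,C_2,C_3$ act by $0$, so do all the $\mathcal C_{2,n}$ and $\mathcal C_{3,n}$, and it suffices to check $[L_n,L_m]$, $[L_n,W_m]$ and $[W_n,W_m]$ as operator identities on $\C[s,t]$, the central corrections on the right being simply absent. The computation is organized by the remark that in either picture each $L_m$ and each $W_m$ is the shift $\sigma_m\colon f(s,t)\mapsto f(s-m,t)$ followed by an operator acting in $t$ only (multiplication by a polynomial in $t$ for $W_m$; a first-order $t$-differential operator with polynomial coefficients for $L_m$); since $s$-shifts commute with everything in $t$, each bracket collapses to a finite polynomial identity. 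For $\Phi(\lambda,\alpha,h)$ the substantive inputs are the ``cocycle'' identity $(s+h_n)(s-n+h_m)-(s+h_m)(s-m+h_n)=(m-n)(s+h_{m+n})$ (up to the explicit $\delta_{b,\pm1}$ corrections), together with the compatibility of $h_m(t)=mh(t)-m\alpha\big(\delta_{b,-1}(m-1)+\delta_{b,1}\big)g(t)$ with $g(t)=\big(h(t)-h(\alpha)\big)/(t-\alpha)$; for $\Theta(\lambda,h)$ the $\partial_t$-term and all $W_m$ with $m\neq0$ drop out and the check is shorter. I would run this once, handling $b=\pm1$ and generic $b$ uniformly through the Kronecker deltas.

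For part (2), let $v$ freely generate $M$ over $U(\mathfrak h)$ with $\mathfrak h=\C L_0\oplus\C W_0$; since $\mathcal C_{2,0}=0$ we have $[L_0,W_0]=aW_0$, so $M\cong U(\mathfrak h)$ with PBW basis $\{L_0^iW_0^jv\}$. The first step is to show $a=0$: when $a\neq0$ the elements $\mathcal C_{2,\pm1}$ and $\mathcal C_{3,\pm1}$ vanish, and a direct computation on the PBW basis using $[L_1,W_{-1}]=(a-1+b)W_0$, $[L_{-1},W_1]=(a+1-b)W_0$ and $[W_1,W_{-1}]=0$, together with polynomiality, forces $a=0$. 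With $a=0$, $\mathfrak h$ is abelian, $U(\mathfrak h)=\C[s,t]$, and I identify $M=\C[s,t]$ so that $L_0,W_0$ act by multiplication by $s$ and $t$ respectively. Now using $[W_m,L_0]=-mW_m$, $[W_m,W_0]=0$, $[L_m,L_0]=-mL_m$ and $[L_m,W_0]=bmW_m$ one computes the action of each $W_m,L_m$ on an arbitrary PBW monomial and obtains, for all $m\in\Z$, $W_mf=R_m(s,t)f(s-m,t)$ and $L_mf=Q_m(s,t)f(s-m,t)+bmR_m(s,t)(\partial_t f)(s-m,t)$, where $R_m:=W_mv$ and $Q_m:=L_mv$ are (so far unknown) polynomials in $\C[s,t]$, with $R_0=t$, $Q_0=s$. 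A routine argument from this normal form then shows $C_1=C_2=C_3$ act by $0$ on $M$.

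It remains to determine $R_m$ and $Q_m$. The key dichotomy comes from $[L_n,W_{-n}]=(a+(b-1)n)W_0+\mathcal C_{2,n}=(b-1)n\,t\,\cdot$: either $R_m\equiv0$ for every $m\neq0$, which forces $b=1$, kills the $\partial_t$-terms, and leaves $L_mf=\lambda^m(s+mh(t))f(s-m,t)$ for some $\lambda\in\C^*,h\in\C[t]$, i.e.\ $M\cong\Theta(\lambda,h)$; or else the coupled functional equations coming from $[W_n,W_m]=0$, from $[L_n,W_{-n}]=(b-1)n\,t\,\cdot$ and from the recursions $[L_{\pm1},W_m]=(m\pm b)W_{m\pm1}$ force (after rescaling $v$) $R_m(s,t)=\lambda^m\big(t-\delta_{b,-1}m\alpha-\delta_{b,1}(1-\delta_{m,0})\alpha\big)$ for a common $\lambda\in\C^*$ and some $\alpha\in\C$. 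Feeding the $R_m$ into the Virasoro relation $[L_n,L_m]=(m-n)L_{m+n}$ and into $[L_n,W_m]=(m+bn)W_{m+n}$, a degree count forces $Q_m(s,t)=\lambda^m s+(\text{a polynomial in }t)$, and the recursion $[L_1,L_{m-1}]=(m-2)L_m$ with $L_0f=sf$ assembles these $t$-parts into $\lambda^m h_m(t)$ for a single $h\in\C[t]$, so $M\cong\Phi(\lambda,\alpha,h)$. Combined with part (1), this proves (2): in all cases $a=0$, and $M\cong\Phi(\lambda,\alpha,h)$ or (only when $b=1$) $M\cong\Theta(\lambda,h)$.

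The step I expect to be the main obstacle is this last one: controlling the degrees of the unknowns $R_m,Q_m$ — showing they are affine in $s$ with $t$-dependence governed by a single polynomial $h$ — while solving the coupled functional equations, and keeping the degenerate case $b=1$ (where the extra family $\Theta$ appears) cleanly separated from the generic case, all the while tracking the nonzero values of $\mathcal C_{2,n}$ for $b\in\{0,\pm1\}$ and excluding the mixed possibilities in which only some $R_m$ vanish. By comparison, establishing $a=0$, the shift normal form, and the vanishing of the central charges should be routine, and part (1) is a mechanical if lengthy verification.
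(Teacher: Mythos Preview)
The paper does not prove this theorem at all: it is quoted verbatim from \cite{HCS} (``Then we have the following results (see \cite{HCS})''), and the present paper takes it as input in order to study the submodule structure of $\Phi(\lambda,\alpha,h)$ and $\Theta(\lambda,h)$. So there is no ``paper's own proof'' to compare against; what you have written is a sketch of the argument that \cite{HCS} presumably contains.

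As a sketch, your outline is sound and follows the expected route for results of this type. Part~(1) is indeed a mechanical verification, and your organizing principle (each $L_m,W_m$ is an $s$-shift composed with a $t$-operator) is exactly how one keeps the bookkeeping manageable. For part~(2) the skeleton is right: freeness of rank one lets you identify $M$ with $U(\mathfrak h)$, the relations $[L_m,L_0]$, $[W_m,L_0]$, $[L_m,W_0]$, $[W_m,W_0]$ put the action into the shift normal form you describe, and then the bracket relations become functional equations for $R_m,Q_m$. Your dichotomy (all $R_m=0$ for $m\neq0$ versus not) is the correct branch point, and your observation that $[L_n,W_{-n}]v=n(b-1)\,t\cdot v+\mathcal C_{2,n}v$ must vanish in the first branch does force $b=1$.

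Two places where your sketch would need real work to become a proof: first, your argument that $a=0$ is only gestured at---when $a\neq0$ the algebra $\mathfrak h$ is nonabelian and one has to argue more carefully why no free rank-one $U(\mathfrak h)$-module can support a compatible $\V(a,b)$-action; the relations you name are the right ones to use, but ``polynomiality forces $a=0$'' hides a genuine computation. Second, and as you yourself flag, the hardest step is ruling out the mixed case (some but not all $R_m$ vanish) and pinning down the exact $s$- and $t$-degrees of $R_m,Q_m$; this is where most of the labor in \cite{HCS} lies, and your proposal correctly identifies it as the main obstacle without resolving it.
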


Note from Theorem \ref{theo1}\,(2) that if $a\ne0$, there does not exist a
 $\V (a,b)$-module which is free of rank one over  $U(\C L_0 \oplus\C W_0)$. Therefore in the present paper, we will focus on the study of submodules of
 $\V(0,b)$-module $\Phi(\lambda, \alpha,h)$ and $\V(0,1)$-module $\Theta(\lambda,h)$.
 It is well-known that the Virasoro algebra
 $\mathscr V\!$ is the universal central extension of the Witt algebra $\mathscr W\!$, i.e., $\mathscr V\!$ is the Lie algebra with basis $\{L_m,C\,|\,m\in\Z\}$ and the following relations, for $m,n\in\Z$,
\begin{eqnarray}
\label{vir}
[L_m,L_n]=(n-m)L_{m+n}+\delta_{m+n,0}\frac{m^3-m}{12}C,\ \ \ \ [L_m,C]=0.
\end{eqnarray}
Then
  $\Phi(\lambda,\alpha,h)$, $\Theta(\lambda,h)$ are naturally non-weight $\mathscr V\!$-modules with the trivial action of $C$.
By introducing two sequences of operators $S^j$ and  $T^j$ for $j\geq 0$ (cf.~\eqref{operator-ST}), we are able to determine
when  $\Phi(\lambda,\alpha,h)$, $\Theta(\lambda,h)$ are   irreducible modules over $\mathscr V\!$, and furthermore, by taking tensor products of these irreducible modules with  $\mathscr V\!$-modules studied in \cite{MZ},
we obtain a large class of new irreducible non-weight $\mathscr V\!$-modules.
We would like to emphasis that the operators $S^j,\,T^j$ play
 very important roles in the paper: they not only allow us to
determine whether or not a subspace of $\C[s,t]$ is a submodule over $\V(0,b)$ (or $\mathscr V$), but also allow us to compute cyclic submodules over $\V(0,b)$ (or $\mathscr V$).

The rest of the present paper is organized as follows.  Section 2 is devoted to determining $\V(0,b)$-submodules of $\Phi(\lambda,\alpha, h)$ and $\V(0,1)$-submodules of $\Theta(\lambda,h).$
The determinations are divided into three subsections according to whether $b=0,\,b=1$ or $b\ne0,1$.
%
In Section 3, all cyclic $\mathscr V\!$-submodules of $\C[s,t]$ are determined. We also
prove that  all $\mathscr V\!$-submodules  of $\C[s,t]$ are finitely generated.
Furthermore, we obtain that the $\mathscr V\!$-module  $\Theta(\lambda,h)$ is always reducible and that
the  $\mathscr V\!$-module $\Phi(\lambda,\alpha,h)$ is irreducible if and only if $\alpha\in\C^*, {\rm deg}\, h(t)=1$ and $b=-1$. In Section 4, we obtain a large family of new irreducible modules over the Virasoro algebra $\mathscr V\!$ by taking various tensor products of a finite number of irreducible modules $\Phi(\lambda_i,\alpha_i, h_i)$ for $\lambda_i\in\C^*,$ $\alpha_i\in\C$, $h_i\in\C[t]$
with an irreducible $\mathscr V\!$-module  $V$,
  where  $V$ satisfies that  there exists $R_V\in\Z_+$ such that $L_m$ acts locally finitely on $V$ for  $m\geq R_V$.

Throughout the paper,  $\Z_+, \C, \C^*$ denote the sets of nonnegative integers,  complex numbers and nonzero complex numbers, respectively.

\section{$\V(0,b)$-submodules}
 The aim of this section is to investigate reducibility of  $\V(0,b)$-modules  $\Phi(\lambda,\alpha,h)$ and  $\V(0,1)$-modules $\Theta(\lambda,h)$, and determine all submodules when they are reducible.

Let $\lambda\in\C^*, \al\in\C$, $h(t)\in\C[t]$ and $g(t)\in\C(t)$ be as in
\eqref{g-hi-}.
For $j\in\Z$ we denote $\partial_s^j=0$ if $j<0$ and
$\partial_s^j=\big(\frac{\partial}{\partial s}\big)^j$ if $j\ge0$, and set $\partial_t=\frac{\partial}{\partial t}$.
Define the following operators on $\C[s,t]$, for $j\in\Z_+$,
\begin{eqnarray}
\label{operator-ST}
&\!\!\!\!\!\!\!\!\!\!\!\!\!\!\!\!\!\!\!\!&
T^j=\frac {t-\delta_{b1}\alpha}{j!}\partial_s^j+\frac{\delta_{b,-1} \alpha}{(j-1)!}\partial_s^{j-1},
\ \ \ \ \ \ \ 
 S^j_{\Theta}=\frac s{j!}\partial_s^j-\frac {h(t)}{(j-1)!}\partial_s^{j-1},\nonumber\\[4pt]
&\!\!\!\!\!\!\!\!\!\!\!\!\!\!\!\!\!\!\!\!&
S^j=\frac s{j!}\partial_s^j-\frac 1{(j-1)!}\Big(h(t)+\delta_{b,-1}\alpha g(t)-\delta_{b1}\alpha g(t)+(bt-\delta_{b1}\alpha)\partial_t\Big)\partial_s^{j-1}\nonumber\\
&\!\!\!\!\!\!\!\!\!\!\!\!\!\!\!\!\!\!\!\!&
\phantom{S^j=}-\frac{\delta_{b,-1}\alpha }{(j-2)!}\Big(g(t)-\partial_t\Big)\partial_s^{j-2},
\end{eqnarray}
where we have used  the convention that  $k!=1$ if $k<0$. In the following we will also use the convention that
 $\binom{i}{j}=0$  if  $j> i$ or $j<0$.

\begin{lemm}\label{lemm-1}
Let $M$ be a subspace of $\C[s,t]$. Then
\begin{itemize}\item[\rm(1)]
$M$ is a $\V(0,b)$-submodule of $\Phi(\lambda,\alpha,h)$  if and only if $M$ is invariant under the actions of $S^j,$ $T^j$ for $j\in\Z_+;$
\item[\rm(2)]$M$ is a $\V(0,1)$-submodule of $\Theta(\lambda, h)$ if and only if $M$ is invariant under
the left multiplication of $\,t$ and under the actions of
 $S^j_{\Theta}$ for $j\in\Z_+$.
\end{itemize}
In particular$,$ $M$ is a $\mathscr W\!$-submodule of $\Phi(\lambda,\alpha,h)$ $($resp. $\Theta(\lambda,h)$$)$ if and only if $M$ is invariant under the operators $S^j$ $($resp. $S^j_\Theta$$)$ for $j\in\Z_+$.
\end{lemm}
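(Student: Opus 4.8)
The plan is to translate the abstract condition "$W$ is a submodule" into a concrete, testable condition by observing that the generating functions $\sum_m L_m x^{-m-?}$ and $\sum_m W_m x^{-m}$, acting on a fixed $f(s,t)$, encode exactly the operators $S^j, T^j$ (resp. $S^j_\Theta$ and left multiplication by $t$). Concretely, I would first compute, for a fixed $f=f(s,t)\in\C[s,t]$, the element $\lambda^{-m}L_m f$ as a function of $m$. From the formula $L_m f = \lambda^m(s+h_m(t))f(s-m,t)+bm\lambda^m(t-\delta_{b,-1}m\alpha-\delta_{b,1}\alpha)\partial_t f(s-m,t)$, I expand $f(s-m,t)$ by the (finite) Taylor formula $f(s-m,t)=\sum_{j\geq 0}\frac{(-m)^j}{j!}\partial_s^j f(s,t)$, and similarly expand $h_m(t)=mh(t)-m\alpha(\delta_{b,-1}(m-1)+\delta_{b,1})g(t)$. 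Collecting the result as a polynomial in $m$, the key claim is that the coefficient of $m^j$ in $\lambda^{-m}L_m f$ is, up to sign, exactly $S^j f$; one double-checks the $j$, $j-1$, and $j-2$ terms against the definition of $S^j$ (the $(j-2)$-term coming from the quadratic-in-$m$ piece of $h_m$ when $b=-1$, and the $\partial_t$ pieces lining up because $bm\lambda^m(t-\cdots)\partial_t$ contributes at orders $m^j$ via the Taylor expansion). The same computation with $W_m f = \lambda^m(t-\delta_{b,-1}m\alpha-\delta_{b,1}(1-\delta_{m,0})\alpha)f(s-m,t)$ shows that the coefficient of $m^j$ in $\lambda^{-m}W_m f$ is $\pm T^j f$ (with the $\delta_{b,1}$ correction at $m=0$ handled separately, but note $W_0$ acts as multiplication by $t$, which is $0!\cdot T^0$ up to the $\alpha$-shift — this needs a small check). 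For $\Theta$, the analogous expansion of $L_m f=\lambda^m(s+mh(t))f(s-m,t)$ yields the $S^j_\Theta$, and $W_m f=\delta_{m,0}t f$ gives left multiplication by $t$ only.

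Granting these two "generating function" identities, the lemma follows by a standard Vandermonde/finite-difference argument. For the forward direction: if $W$ is a submodule and $f\in W$, then $L_m f\in W$ for all $m\in\Z$, hence $\lambda^{-m}L_m f\in W$; since this is a polynomial in $m$ of bounded degree (degree at most $\deg_s f + \text{something}$, in any case finite and uniform), evaluating at enough integer values of $m$ and inverting the Vandermonde matrix expresses each coefficient $S^j f$ as a $\C$-linear combination of the $\lambda^{-m}L_m f$, so $S^j f\in W$; likewise $T^j f\in W$ from the $W_m$. For the converse: if $W$ is stable under all $S^j$ and $T^j$, then for any $f\in W$ and any $m$, $\lambda^{-m}L_m f=\sum_j (\pm m^j) S^j f$ (a finite sum, since $S^j f=0$ once $j>\deg_s f + 1$ or so) lies in $W$, and similarly $W_m f\in W$; since the $C_i$ act as $0$, $W$ is stable under all of $\mathscr B$, hence is a $\V(0,b)$-submodule. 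The $\Theta$ case and the "in particular" clause about $\mathfrak V$-submodules are immediate specializations: a $\mathfrak V$-submodule only requires stability under the $L_m$ (the $C_1$ acts as zero), which by the same argument is equivalent to stability under all $S^j$ (resp. $S^j_\Theta$).

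The main obstacle is the bookkeeping in the first step: verifying that the messy $m$-dependence of $L_m f$ and $W_m f$ reorganizes \emph{exactly} into the operators as written, including the $\delta_{b,\pm 1}$ terms and the off-by-one shifts in $\partial_s^{j-1}, \partial_s^{j-2}$. In particular one must be careful that (i) the term $bm\lambda^m(t-\delta_{b,-1}m\alpha-\delta_{b,1}\alpha)\partial_t f(s-m,t)$ contributes to the coefficient of $m^j$ both through its explicit factor $m$ (or $m^2$ when $b=-1$) and through the Taylor coefficients $\frac{(-m)^{j-1}}{(j-1)!}$, $\frac{(-m)^{j-2}}{(j-2)!}$ of $f(s-m,t)$, and that these combine to give precisely the $(bt-\delta_{b,1}\alpha)\partial_t$ piece inside $S^j$ and the extra $\delta_{b,-1}\alpha(g(t)-\partial_t)$ piece at order $j-2$; and (ii) that $h_m(t)$ expanded in $m$ produces the $h(t)+\delta_{b,-1}\alpha g(t)-\delta_{b,1}\alpha g(t)$ at order $m^1$-relative-to-$f$ and $-\delta_{b,-1}\alpha g(t)$ at the quadratic order. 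Once the sign conventions (the $(-1)^j$ from $(-m)^j$) and the convention $\partial_s^{-1}=0$, $k!=1$ for $k<0$ are fixed consistently, everything else is the routine Vandermonde inversion, which requires no new ideas. I would present step (i) carefully and relegate step (ii) and the Vandermonde step to brief remarks.
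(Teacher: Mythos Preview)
Your proposal is correct and is essentially the paper's own argument: the paper expands $L_m f$ and $W_m f$ as polynomials in $m$ (using $f=\sum_i s^i f_i(t)$ and binomial expansion of $(s-m)^i$, which is your Taylor expansion in disguise) and arrives at the identities $L_m=\lambda^m\sum_{j\ge 0}(-m)^jS^j$ and $W_m=\lambda^m\sum_{j\ge 0}(-m)^j(T^j+\delta_{b,1}\delta_{m,0}\alpha)$, from which the equivalence follows by the same Vandermonde reasoning you describe. The only cosmetic difference is that the paper presents the coefficient-matching bookkeeping explicitly via $\binom{i}{j}$ identities rather than via $\frac{1}{j!}\partial_s^j$; your handling of the $\delta_{m,0}$ correction when $b=1$ is also in line with the paper's formula.
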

\begin{proof}We will only prove the statements concerning $\Phi(\lambda, \alpha, h)$ as the proof for the statements concerning  $\Theta(\lambda, h)$ is similar.  For any $f(s,t)=\sum_{i=0}^ns^if_i(t)\in\C[s,t]$ with $f_i(t)\in\C[t]$ and $n\in\Z_+$, by \eqref{phi-module} and \eqref{operator-ST}, we have
\begin{eqnarray*}\
W_mf(s,t)&\!\!\!=\!\!\!& \lambda^m\Big(t-\delta_{b,-1}m\al-\delta_{b1}(1-\delta_{m0})\al\Big)\mbox{$\sum\limits_{i=0}^n$}(s-m)^{i}f_i(t)\\
&\!\!\!=\!\!\!& \lambda^m\mbox{$\sum\limits_{j=0}^{n+1}$}(-m)^j\mbox{$\sum\limits_{i=0}^n$}s^{i-j}\mbox{\LARGE$\Big($}\!
\binom{i}{j}\Big(t-\delta_{b1}(1-\delta_{m0})\al\Big)+\binom{i}{j-1}\delta_{b,-1}\alpha s\!\mbox{\LARGE$\Big)$}f_i(t)\\
&\!\!\!=\!\!\!&  \lambda^m\mbox{$\sum\limits_{j=0}^{n+1}$} (-m)^j(T^j+\delta_{b1}\delta_{m0}\alpha)f(s,t),
\\[6pt]
L_mf(s,t)&\!\!\!=\!\!\!& \lambda^m\Big(s+h_m (t)\Big)f(s-m,t)+\lambda^mbm\big(t-\delta_{b,-1}m\al-\delta_{b1}\al\big)\partial_t\Big(f(s-m,t)\Big)\\
&\!\!\!=\!\!\!& \lambda^m
\mbox{\large$\Big($}\!
-m^2\delta_{b,-1}\alpha \Big(g(t)-\partial_t\Big)+m\Big(h(t)+\delta_{b,-1}\alpha g(t)-\delta_{b1}\alpha g(t)\\ &&\quad \ \ \  \ +\,(bt-\delta_{b1}\alpha)\partial_t\Big)+s\! \mbox{\large$\Big)$}\mbox{$\sum\limits_{i=0}^n$} (s-m)^if_i(t)\\
&\!\!\!=\!\!\!& \lambda^m\mbox{$\sum\limits_{j=0}^{n+2}$}(-m)^j\mbox{$\sum\limits_{i=0}^n$}
s^{i+1-j}
\mbox{\LARGE$\Big($}\!
\binom{i}{j}-\binom{i}{j-1}\Big(h(t)+\delta_{b,-1}\alpha g(t)-\delta_{b1}\alpha g(t)\\
&&\phantom{%
\lambda^m\mbox{$\sum\limits_{j=0}^{n+2}$}(-m)^j\mbox{$\sum\limits_{i=0}^n$}
s^{i+1-j}
\mbox{\LARGE$\Big($}\!
}
+(bt-\delta_{b1}\alpha)\partial_t\Big)-\binom{i}{j-2}\delta_{b,-1}\alpha s(g(t)-\partial_t)
\!\mbox{\LARGE$\Big)$}
f_i(t)\\
&\!\!\!=\!\!\!& \lambda^m \mbox{$\sum\limits_{j=0}^{n+2}$} (-m)^jS^jf(s,t).
\end{eqnarray*}
The above imply the following (note from definition \eqref{operator-ST} that when applying the following to any element in $\C[s,t]$ there are only finite nonzero terms)
\begin{equation}\label{ammel}L_m=\lambda^m\mbox{$\sum\limits_{j=0}^\infty$}(-m)^jS^j,\quad {\rm }\quad W_m=\lambda^m\mbox{$\sum\limits_{j=0}^\infty$}(-m)^j (T^j+\delta_{b1}\delta_{m,0}\alpha).\end{equation} Therefore, a subspace $M$ of $\C[s,t]$ is a $\V(0,b)$-submodule if and only if $S^jf(s,t), T^jf(s,t)\in M$ for all  $f(s,t)\in M$ and $j\in\Z_+.$
\end{proof}

For any $f(s,t)\in\C[s,t]$,  we use $\mathcal S_{f(s,t)}$ to denote the $\V(0,b)$-submodule of $\Phi(\lambda,\alpha,h)$, or the $\V(0,1)$-submodule of $\Theta(\lambda,h)$, generated by $f(s,t)$.
Now we divide the determinations of submodules into three subsections according to whether $b=0$, $b=1$ or $b\ne0,1$.
%
%
%

\subsection{The $\V(0,0)$-submodules of $\Phi(\lambda,\alpha,h)$}
In this subsection we assume  $b=0$.
Then $T^j, S^j$ in \eqref{operator-ST} have the following simplified forms,
\begin{eqnarray*}
T^j=\frac t{j!}\partial_s^j,\ \ \ \ \ \ \ \ {\rm }\ S^j=\frac s{j!}\partial_s^j-\frac 1{(j-1)!}\partial_s^{j-1}h(t).
\end{eqnarray*}
From this, it is easy to check that for $f(t)\in\C[t]$, we have
\begin{eqnarray*} \mathcal S_{f(t)}=\C[s,t]f(t),\ \ \ \quad
\mathcal S_{sf(t)}=\C[s,t]sf(t)+\C[s,t]tf(t)+\C[s,t]h(t)f(t).
\end{eqnarray*}
In particular,
$\mathcal S_{sf(t)}=\mathcal S_{f(t)}$ if $h(0)\neq 0$, and
  $\mathcal S_{sf(t)}=\C[s,t]sf(t)+\C[s,t]tf(t)$ if $ h(0)=0$.

\begin{lemm}\label{lemm-2(b=0)}Let $f(s,t)=\sum_{i=0}^ns^if_i(t)\in\C[s,t]$ for some $f_i(t)\in\C[t]$ and $1\le n\in\Z_+$.
Then \begin{equation}
\label{eqn-lemm-2(b=0)}
\mathcal S_{f(s,t)}=\mbox{$\sum\limits_{i=1}^n$}\mathcal S_{sf_i}+\mathcal S_{f_0}.
\end{equation}
\end{lemm}
\begin{proof}
The right-hand side of \eqref{eqn-lemm-2(b=0)} is obviously a $\V(0,0)$-submodule of $\Phi(\lambda,\alpha,h)$  containing $f(s,t)$. Thus,  it suffices to prove that $\sum_{i=1}^n\mathcal S_{sf_i}+\mathcal S_{f_0}\subseteq \mathcal S_{f(s,t)}$, which, by induction on $n\ge0$, can be reduced to proving that $\mathcal S_{sf_n}\subseteq \mathcal S_{f(s,t)}$. The later is  equivalent to that $sf_n(t)\in \mathcal S_{f(s,t)}$.

First assume $h(0)\neq0.$
Note from Lemma \ref{lemm-1} that $sh(t)f_n(t)=-S^0S^{n+1}f(s,t)\in \mathcal S_{f(s,t)}$ and that  $$s\Big(h(t)-h(0)\Big)f_n(t)= \frac{S^0\Big(h(T^0)-h(0)\Big)T^nf( s, t)}{t}\in\mathcal S_{f(s,t)} .$$ Thus,  $sf_n(t)\in \mathcal S_{f(s,t)}$.
Now assume
$h(0)=0,$ i.e., $t\,|\,h(t)$.
 It follows from Lemma \ref{lemm-1} that  $stf_n(t)=S^0T^nf(s,t)\in\mathcal S_{f(s,t)}$ and $nstf_n(t)+tf_{n-1}(t)=T^{n-1}f(s,t)\in \mathcal S_{f(s,t)},$ which imply that $tf_{n-1}(t)\in \mathcal S_{f(s,t)}$. Then by our assumption that $t\, |\, h(t)$ and by Lemma \ref{lemm-1} again, we obtain $$h(t)f_{n-1}(t)=h(T^0)f_{n-1}(t)\in \mathcal S_{f(s,t)}.$$ These and the fact that $sf_n(t)-nsh(t)f_n(t)-h(t)f_{n-1}(t)=S^nf(s,t)\in \mathcal S_{f(s,t)}$  immediately imply that $sf_n(t)\in\mathcal S_{f(s,t)}$.
\end{proof}

\begin{theo}\label{theo-1(b=0)}
 The set $\big\{\mathcal S_{sf}, \mathcal S_f\mid f:=f(t)\in \C[t]\big\}$
 exhausts all  $\V(0,0)$-submodules of $\Phi(\lambda, \alpha, h)$. In particular$,$  all $\V(0,0)$-submodules of $\Phi(\lambda,$ $\alpha,h)$ are cyclic.
 \end{theo}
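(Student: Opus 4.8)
The plan is to first show that an arbitrary nonzero $\V(0,0)$-submodule is a sum of the \emph{basic} submodules occurring in the statement, and then to show that such a sum always collapses to a single one of them.

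To begin, I would use that for $b=0$ the operators $S^0$ and $T^0$ of Lemma \ref{lemm-1} are simply left multiplication by $s$ and by $t$; hence every $\V(0,0)$-submodule $W$ is an ideal of $\C[s,t]$, and in particular $\mathcal S_g\subseteq W$ for every $g\in W$. Writing $W=\sum_{g\in W}\mathcal S_g$ and applying Lemma \ref{lemm-2(b=0)} to each summand exhibits $W$ as a sum of submodules each of the form $\mathcal S_{sf}$ or $\mathcal S_f$ with $f(t)\in\C[t]$.

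I would then collapse this sum with the help of the principal ideal domain $\C[t]$. Put $A=\{f\in\C[t]:\mathcal S_{sf}\subseteq W\}$ and $B=W\cap\C[t]=\{f\in\C[t]:\mathcal S_f\subseteq W\}$. Since $W$ is closed under multiplication by $t$ (that is, under $T^0$), both $A$ and $B$ are ideals of $\C[t]$, and from $\mathcal S_{sf}\subseteq\mathcal S_f$ one gets $B\subseteq A$; write $A=(a(t))$ and $B=(b(t))$, so $a\mid b$. Next I would show $W=\mathcal S_{sa}+\mathcal S_b$: the inclusion $\supseteq$ is immediate, and for $\subseteq$, given $g=\sum_{i=0}^ns^ig_i(t)\in W$, Lemma \ref{lemm-2(b=0)} gives $\mathcal S_{sg_i}\subseteq\mathcal S_g\subseteq W$ for $i\ge1$ and $g_0\in W\cap\C[t]$, so $g_i\in A$ for $i\ge1$ and $g_0\in B$; writing $g_i=u_i\,a$ then yields $s^ig_i=(s^{i-1}u_i)(sa)\in\mathcal S_{sa}$ and $g_0\in(b)=\mathcal S_b$, whence $g\in\mathcal S_{sa}+\mathcal S_b$.

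The final step, which I expect to be the main obstacle since it is where the concrete descriptions of $\mathcal S_{sf}$ and $\mathcal S_f$ must be combined with the module structure, is to verify that $\mathcal S_{sa}+\mathcal S_b$ is itself equal to some $\mathcal S_{sf}$ or $\mathcal S_f$. I would split on the value of $h(0)$. If $h(0)\neq0$, then $\mathcal S_{sf}=\mathcal S_f$ for every $f$, so $W=\mathcal S_a+\mathcal S_b=\mathcal S_a$ (as $a\mid b$). If $h(0)=0$, so that $t\mid h$ and $\mathcal S_{sf}=\C[s,t]sf+\C[s,t]tf$, write $b=a\,p(t)$: if $t\mid p$ then $\mathcal S_b\subseteq\mathcal S_{sa}$, hence $W=\mathcal S_{sa}$; if $p(0)\neq0$ then from $ap\in W$, $ta\in\mathcal S_{sa}\subseteq W$ and the identity $ap-t(q\,a)=p(0)\,a$ (where $p=p(0)+tq$) we get $a\in W$, so $W=\mathcal S_a$. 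In every case $W$ is one of the listed submodules (and the zero submodule is $\mathcal S_0$), each of which is cyclic by construction; this last observation also yields the final assertion of the theorem.
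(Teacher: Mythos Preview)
Your proof is correct and follows essentially the same route as the paper: use Lemma~\ref{lemm-2(b=0)} to decompose, pick minimal-degree generators $a,b$ (the paper's $f,g$) for the two relevant ideals of $\C[t]$, show $W=\mathcal S_{sa}+\mathcal S_b$, and then collapse. The only noticeable difference is in the final collapse: the paper observes directly that $sg\in M$ and $tf\in M$ force $f\mid g$ and $g\mid tf$, hence $g=f$ or $g=tf$ regardless of $h(0)$, whereas you split on $h(0)$ and handle each case by hand; both arguments are valid, the paper's being marginally shorter.
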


\begin{proof}
Let $M$ be a nonzero proper $\V(0,0)$-submodule of $\Phi(\lambda,\alpha,h)$. Note from Lemma \ref{lemm-2(b=0)} that there exist nonzero polynomials $f:=f(t),\,g:= g(t)\in\C[t]$ such that $sf(t), g(t)\in M$.
Without loss of generality, we may assume both ${\rm deg}\, f(t)$ and ${\rm deg}\, g(t)$ are minimal (we regard the degree of the zero polynomial as $\infty$).

\begin{clai*}
$M=\mathcal S_{sf}+\mathcal S_g.$
\end{clai*}
It suffices to prove that $M\subseteq\mathcal S_{sf}+\mathcal S_{g}$. For any $p(s,t)=\sum_{i\geq0} s^ip_i(t)\in M,$ we know from Lemma \ref{lemm-2(b=0)} that $sp_i(t), p_0(t)\in M$ for $i\ge1$. Now by the minimalities of ${\rm deg}\, f(t)$ and ${\rm deg}\, g(t)$, we must have $f(t)\mid p_i(t)$ and $g(t)\mid p_0(t)$ for $i\geq1$.  Thus, $p(s,t)\in \mathcal S_{sf}+\mathcal S_g$, proving the claim.

Since $sg(t), tf(t)\in M,$ by the minimalities of deg $f(t)$ and deg $g(t)$ we have $f(t)\mid g(t)$ and $g(t)\mid tf(t)$. Thus, $g(t)=f(t)$ or $g(t)=tf(t)$ (up to a nonzero scalar), and accordingly,  $M=\mathcal S_g$ or $M=\mathcal S_{sf}$.
\end{proof}

Now  
we  describe all maximal $\V(0,0)$-submodules of the cyclic submodule $\mathcal S_{f(s,t)}$ of $\Phi(\lambda,\alpha,h)$ and determine when $\mathcal S_{f(s,t)}=\mathcal S_{sg}$ or $\mathcal S_{f(s,t)}=\mathcal S_g$ for some $g\in\C[t]$.

\begin{prop}\label{prop-1(b=0)}
\begin{itemize}
\item[\rm(1)] Let $0\neq f(t)\in\C[t]$. Every maximal $\V(0,0)$-submodule of $\mathcal S_{sf}$  $($resp.$,$
$\mathcal S_{f})$ has the form $\mathcal S_{spf}$ or $\mathcal S_{tf}$ $($resp.$,$ $\mathcal S_{pf},$
or $\mathcal S_s$ in case $f=1$ and $h(0)=0)$ for some irreducible polynomial $p\in\C[t]$ $($i.e.$,$ $p=p_0t+p_1$ is a linear polynomial for some
$p_0\in\C^*,\,p_1\in\C)$.

\item[\rm (2)] Let $f(s,t)=\sum_{i=0}^nf_i(t)s^i\in\C[s,t]$ and assume $g(t)\in\C[t]$ is a monic polynomial.  Then  $\mathcal S_{f(s,t)}=\mathcal S_{sg}$   if and only if
\begin{eqnarray*}
\begin{cases}
(f_0, f_1,\ldots, f_n)=g(t)
\quad  \mbox{and}\quad  tg(t)\mid f_0(t)
 &\mbox{if $h(0)=0$},\\[4pt] (f_0, f_1,\ldots, f_n)=g(t) &\mbox{else},
\end{cases}
\end{eqnarray*}
where $(f_0,f_1,...,f_n)$ denotes the greatest common divisor of $f_0(t),f_1(t),...,f_n(t),$
and $\mathcal S_{f(s,t)}=\mathcal S_g$ if and only if
\begin{eqnarray*}
\begin{cases}
(f_0, tf_1,\ldots, tf_n)=g(t)\quad  \mbox{and}\quad  g(t)\mid f_i(t)\mbox{ for }i\geq1 &\mbox{if $h(0)=0$},\\[4pt]
(f_0, f_1,\ldots, f_n)=g(t) &\mbox{else}.
\end{cases}
\end{eqnarray*}
\end{itemize}
\end{prop}
\begin{proof}
(1) We only focus on the case $\mathcal S_{sf}$, the other case can be treated similarly. Assume $M$ is a maximal submodule of $\mathcal S_{sf}.$ Then being a submodule of $\Phi(\lambda, \alpha, h)$, by Theorem \ref{theo-1(b=0)}, we obtain that
either $M=\mathcal S_{sg}$  or $M=\mathcal S_g$ for some $g(t)\in\C[t]$.
Since $M\subset\mathcal S_{sf}$, we have $sg\in\mathcal S_{sf}$ or $g\in\mathcal S_{sf}$.
In any case using the fact that $\C[s,t]$ is a unique factorization domain, we deduce  that  $f(t)\mid g(t)$, i.e.,  $g(t)=p(t)f(t)$ for some $p(t)\in\C[t]$. Then $p(t)$ must be irreducible otherwise any nontrivial factor $q(t)$ of $p(t)$ would give rise to  a  submodule  $\mathcal S_{sqf}$  (resp., $\mathcal S_{qf})$ larger than $\mathcal S_{sg}$ (resp., $\mathcal S_{g}$). Note that if $M=\mathcal S_{g}=\mathcal S_{pf}\subseteq \mathcal S_{sf}$ then we must have $pf\mid  tf$, i.e., $p\mid t$, thus $p=t$ and $M=\mathcal S_{tf}.$

(2)  Note fron Lemma \ref{lemm-2(b=0)} that in case $h(0)=0$, we have
\begin{eqnarray*}&\!\!\!&\mathcal S_{f(s,t)}=\mathcal S_{sg}\\ &\!\!\!&\Longleftrightarrow
 \mbox{$\sum\limits_{i=1}^n$}\Big(\C[s,t]sf_i(t)+\C[s,t]tf_i(t)\Big)+\C[s,t]f_0(t)=\C[s,t]sg(t)+\C[s,t]tg(t)\\
&\!\!\!&\Longleftrightarrow
\exists\, q_i(t)\in\C[t]\mbox{ such that } sg(t)=\mbox{$\sum\limits_{i=0}^n$}sq_i(t)f_i(t),\
tg(t)\mid f_0(t),\ tg(t)\mid tf_i(t)\mbox{ for } i\geq1,\\
&\!\!\!&\Longleftrightarrow
(f_0, f_1,\ldots, f_n)=g(t)\mbox{ and } tg(t)\mid f_0(t),
\end{eqnarray*}
and in case $h(0)\ne0$, we have
\begin{eqnarray*}\mathcal S_{f(s,t)}=\mathcal S_{sg} &\!\!\!\Longleftrightarrow\!\!\!&
\mbox{$\sum\limits_{i=1}^n$}\C[s,t]f_i(t)+\C[s,t]f_0(t)=\C[s,t]g(t)\\
&\!\!\!\Longleftrightarrow\!\!\!&
\exists\, q_i(t)\in \C[t]\mbox{ such that } g(t)=\mbox{$\sum\limits_{i=0}^n$} q_i(t)f_i(t)
\mbox{ and } g(t)\mid f_i(t)\mbox{ for } i\geq0\\
&\!\!\!\Longleftrightarrow\!\!\!&
(f_0, f_1,\ldots, f_n)=g(t).
\end{eqnarray*}
In addition, in case $h(0)=0$ we have
\begin{eqnarray*}&&\mathcal S_{f(s,t)}=\mathcal S_{g}\\ &&\Longleftrightarrow
 \mbox{$\sum\limits_{i=1}^n$}\Big(\C[s,t]sf_i(t)+\C[s,t]tf_i(t)\Big)+\C[s,t]f_0(t) =\C[s,t]g(t)\\
&&\Longleftrightarrow
\exists\, q_i(t)\in\C[t]\mbox{ such that } g(t)= \mbox{$\sum\limits_{i=1}^n$}q_i(t)tf_i(t)+q_0(t)f_0(t)
 \mbox{ and } g(t)\mid f_i(t)\mbox{ for } i\geq0\\
&&\Longleftrightarrow
(f_0, tf_1,\ldots, tf_n)=g(t)\mbox{ and } g(t)\mid f_i(t)\mbox{ for }i\geq1,
\end{eqnarray*}
and in case $h(0)\ne0$ we have
\begin{eqnarray*}\mathcal S_{f(s,t)}=\mathcal S_{g}&\!\!\!\Longleftrightarrow\!\!\!&
\mbox{$\sum\limits_{i=0}^n$}\C[s,t]f_i(t)=\C[s,t]g(t)\\
&\!\!\!\Longleftrightarrow\!\!\!&
\exists\, q_i(t)\in \C[t]\mbox{ such that } g(t)=\mbox{$\sum\limits_{i=0}^n$} q_i(t)f_i(t)
\mbox{ and } g(t)\mid f_i(t)\mbox{ for } i\geq0\\
&\!\!\!\Longleftrightarrow\!\!\!&
(f_0, f_1,\ldots, f_n)=g(t).
\end{eqnarray*}
This completes the proof.
 \end{proof}

As a consequence of Proposition \ref{prop-1(b=0)} we have the following result.
\begin{coro}
Every $\V(0,0)$-submodule of $\Phi(\lambda,\alpha, h)$ has a composition series.
\end{coro}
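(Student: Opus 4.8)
The plan is to deduce the existence of a composition series from the structural results already obtained, namely Theorem \ref{theo-1(b=0)}, which lists all submodules, and Proposition \ref{prop-1(b=0)}, which describes maximal submodules. By Theorem \ref{theo-1(b=0)} every $\V(0,0)$-submodule is of the form $\mathcal S_{sf}$ or $\mathcal S_f$ for some $f(t)\in\C[t]$, so it suffices to exhibit, for each such submodule, a finite chain down to $0$ with simple quotients. First I would record the basic inclusions among these submodules: $\mathcal S_g\subseteq\mathcal S_{sf}$ iff $f\mid g$ and $g\mid tf$ (so $g=f$ or $g=tf$ up to scalar, giving $\mathcal S_{sf}\supseteq\mathcal S_{tf}$ and, when $h(0)=0$, $\mathcal S_{sf}\supseteq\mathcal S_f$ is false in general but $\mathcal S_{sf}/\mathcal S_{tf}$ is handled via Proposition \ref{prop-1(b=0)}(1)); $\mathcal S_{sg}\subseteq\mathcal S_{sf}$ iff $f\mid g$; $\mathcal S_g\subseteq\mathcal S_f$ iff $f\mid g$; and $\mathcal S_{sg}\subseteq\mathcal S_f$ iff $f\mid g$. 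The key finiteness input is that $\C[t]$ is Noetherian with the divisibility partial order well-founded modulo scalars: any strictly descending chain of submodules forces a chain $f_1\mid f_2\mid f_3\mid\cdots$ of polynomials (reading off the generating polynomial), and since each proper step multiplies the generator by a nontrivial factor (by the argument in the proof of Proposition \ref{prop-1(b=0)}(1)), the degrees strictly increase along any chain where the "type" ($s f$ versus $f$) is eventually constant; a type change can happen at most finitely often because it too is governed by divisibility by $t$.

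Concretely, I would argue by induction on $\deg f$. For $\mathcal S_f$ with $f$ not a unit: pick an irreducible factor $p\mid f$, write $f=p f'$; then $\mathcal S_f\subsetneq\mathcal S_{f'}$ and, by Proposition \ref{prop-1(b=0)}(1) applied in reverse, $\mathcal S_{f}$ is a maximal submodule of $\mathcal S_{f'}$, so the quotient is simple. For $\mathcal S_{sf}$: by Proposition \ref{prop-1(b=0)}(1) a maximal submodule is $\mathcal S_{spf}$ ($p$ irreducible) or $\mathcal S_{tf}$; descend to $\mathcal S_{tf}$, which is again of the form $\mathcal S_{s f''}$ is false — rather $\mathcal S_{tf}$ has the shape $\mathcal S_{sg}$? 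Here I must be careful: $\mathcal S_{tf}$ is literally $\mathcal S_{s\cdot(tf)}$ only if that notation means the submodule generated by $s\cdot tf$; it does, so $\mathcal S_{tf}=\mathcal S_{s(tf)}$ and we are back in the same family with generator $tf$ of strictly larger degree — this does not obviously terminate. The resolution is to descend instead to a maximal submodule of the form $\mathcal S_{spf}$ when $p$ exists (i.e. when $f$ is nonconstant, or when the ground polynomial admits a proper factor), and handle the base case $\mathcal S_{s\cdot 1}=\mathcal S_s$ separately: by Proposition \ref{prop-1(b=0)}(1), its maximal submodules are $\mathcal S_{sp}$ ($p$ irreducible) or, when $h(0)=0$, possibly $\mathcal S_t$; choosing the latter branch gives $\mathcal S_s\supsetneq\mathcal S_t\supsetneq 0$ with both quotients simple (the bottom because $\mathcal S_t=\mathcal S_t$ has $\deg t=1$ and $\mathcal S_t/0$ simple), while $\mathcal S_{sp}$ again raises the degree. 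So I would phrase the induction on the pair $(\deg f,\epsilon)$ ordered lexicographically where $\epsilon=1$ for the $s$-type and $\epsilon=0$ for the non-$s$-type, noting that from $\mathcal S_{sf}$ one can always pass to $\mathcal S_{f'}$-type (not $\mathcal S_{sf'}$-type) of no larger degree with simple quotient in between — indeed when $h(0)=0$ the factor $\mathcal S_{sf}\big/\mathcal S_{tf}$ and $\mathcal S_{tf}\big/\cdots$ are better replaced by exhibiting $\mathcal S_{sf}\supsetneq \mathcal S_f$? — that inclusion fails unless $h(0)\ne 0$.

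Thus the honest plan: split on $h(0)$. If $h(0)\ne0$, then $\mathcal S_{sf}=\mathcal S_f$ by the remark preceding Lemma \ref{lemm-2(b=0)}, so every submodule is $\mathcal S_f$, and the chain $\mathcal S_f\supsetneq\mathcal S_{p_1 f}\supsetneq\mathcal S_{p_1p_2 f}\supsetneq\cdots$ obtained by multiplying in the irreducible factors of any common multiple works — but there is no top, rather we go \emph{down}, so take a maximal submodule $\mathcal S_{pf}$, then a maximal submodule of that, etc.; since $\deg$ of the generator strictly increases and these are submodules of the fixed finite-length-per-level lattice, I instead must bound the length from above. The correct bounding observation is: any submodule $N\subseteq\mathcal S_f$ equals $\mathcal S_g$ with $f\mid g$, and the quotient $\mathcal S_f/\mathcal S_g$ has $\C[s,t]$-module length equal to the number of irreducible factors of $g/f$ counted with multiplicity (via the $\C[s,t]$-module structure $\mathcal S_g=\C[s,t]g$ and $\mathcal S_f/\mathcal S_g\cong\C[s,t]/(g/f)$ after cancelling $f$, which is Artinian since $g/f$ is a one-variable polynomial making $\C[s,t]/(g/f)\cong\C[s]\otimes\C[t]/(g/f)$ finite over... no, it's $\C[s]$-free of finite rank, hence not Artinian as abstract module but its lattice of $\V(0,0)$-submodules is finite). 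So the cleanest route is: for $N\subseteq M$ both submodules, define $\ell(M/N)$ as the number of irreducible factors (with multiplicity) picked up when passing from $N$'s generator to $M$'s generator, plus a bookkeeping term for $s$-type versus non-$s$-type changes (at most one such change, bounded since $t\mid h(t)$ forces the $s$-type generator's relation); this $\ell$ is finite, additive along chains, and strictly positive on proper inclusions by Proposition \ref{prop-1(b=0)}(1), whence any chain from $M$ to $0$ has length $\le\ell(M/0)<\infty$. The main obstacle is precisely this bookkeeping: making rigorous that passing to a maximal submodule strictly decreases $\ell$ and that $\ell$ of the whole submodule over $0$ is finite, i.e. that the divisibility chains cannot be infinite and the $s$-type/non-$s$-type alternation is bounded — both of which follow from $\C[t]$ being a Noetherian UFD together with Proposition \ref{prop-1(b=0)}(1) and (2), so no genuinely new idea is needed, only careful induction.
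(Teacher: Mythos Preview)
The paper gives no proof beyond declaring the result a consequence of Proposition~\ref{prop-1(b=0)}, so there is little to compare against. But your argument has a genuine gap that you in fact stumbled over and never resolved: the claim that $\ell(M/0)<\infty$. You correctly observed that descending from $\mathcal S_{sf}$ to a maximal submodule (e.g.\ $\mathcal S_{spf}$ or $\mathcal S_{tf}$) raises the degree of the generating polynomial and that this ``does not obviously terminate.'' It does not terminate: for any nonzero $f$ the chain $\mathcal S_f\supsetneq\mathcal S_{tf}\supsetneq\mathcal S_{t^2f}\supsetneq\cdots$ is infinite and strictly descending, so no nonzero submodule of $\Phi(\lambda,\alpha,h)$ is Artinian, and none admits a finite composition series $0=M_0\subsetneq\cdots\subsetneq M_n=M$ in the standard sense. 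Your proposed length function $\ell(M/N)$ counts irreducible factors of the ratio of generators; this is well-defined and finite for $N\neq 0$, but $0$ has no polynomial generator with a finite factorization, and $\ell(M/0)$ is simply infinite. The Noetherian--UFD remark at the end does not help: $\C[t]$ being Noetherian gives you ACC on submodules (which you have), not DCC (which you need).

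What \emph{is} true, and is the natural consequence of Proposition~\ref{prop-1(b=0)}(1), is that every quotient $\Phi(\lambda,\alpha,h)/M$ has finite length: the submodules containing $\mathcal S_f$ (resp.\ $\mathcal S_{sf}$) are indexed by divisors of $f$ (plus at most one $s$-type/non-$s$-type toggle), and there are only finitely many such divisors up to scalar. Your $\ell(M/N)$ argument works perfectly for this direction, and this is presumably the intended content of the corollary. If the statement is read literally as ``each submodule has a composition series down to $0$,'' it is false; if read as ``$\Phi/M$ has a composition series for every submodule $M$,'' it is immediate from the classification, and your bookkeeping invariant proves it cleanly.
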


\subsection{The $\V(0,1)$-submodules of  $\Phi(\lambda,\alpha, h)$  and $\Theta(\lambda,h)$}

Let us first determine  the $\V(0,1)$-submodules $\mathcal S_{f(s,t)}$ of $\Theta(\lambda,h)$.

\begin{lemm}\label{lemm1-(b=1)}Let  $1\le m\in\Z_+, n\in\Z_+,$ $p(t)\in\C[t]$ and $f(s,t)=\sum_{i=0}^ms^if_i(t)\in\C[s,t]$ with $f_i(t)\in\C[t]$ for $i=0,...,m$. Then
\begin{eqnarray}
\label{lemm1-(b=1)(1)}
&\!\!\!\!\!\!\!\!\!&
\mathcal S_{s^np(t)}= \C[s,t]s^{1-\delta_{n,0}}p(t)+\C[s,t]h(t)p(t),\\
\label{lemm1-(b=1)(3)}
&\!\!\!\!\!\!\!\!\!&
\mathcal S_{f(s,t)}=\mbox{$\sum\limits_{i=1}^{m}$}\mathcal S_{sf_i(t)}+\mathcal S_{f_0(t)}. \end{eqnarray}
\end{lemm}

\begin{proof}Obviously the right-hand side of \eqref{lemm1-(b=1)(1)} is a $\V(0,1)$-submodule containing $s^np(t).$ Thus
it suffices to show that $\mathcal S_{s^np(t)}$ contains $ \C[s,t]s^{1-\delta_{n,0}}p(t)+\C[s,t]h(t)p(t)$.
Assume $n\geq1$ (the case with $n=0$ is trivial). By Lemma \ref{lemm-1}, we have
\begin{eqnarray}
\label{Anooo111}
&\!\!\!\!\!\!&\mathcal S_{s^np(t)}\ni S_\Theta^ns^np(t)=\left(\frac{s}{n!}\partial_s^n-\frac{h(t)}{(n-1)!}\partial_s^{n-1}\right)s^np(t)=sp(t)-nsh(t)p(t),
\nonumber\\
&\!\!\!\!\!\!\!\!&\mathcal S_{s^np(t)}\ni -S_\Theta^{n+1}s^np(t)=\frac{1}{n!}\partial_s^nh(t) s^np(t)=h(t)p(t).
\end{eqnarray}Thus $sp(t)\in \mathcal S_{s^np(t)},$  which together with \eqref{Anooo111} implies \eqref{lemm1-(b=1)(1)}.

 Let $M$ be any $\V(0,1)$-submodule of $\Theta(\lambda,h)$ containing $f(s,t)$. For the equality in \eqref{lemm1-(b=1)(3)}, it suffices to show that
  $f_0(t), sf_1(t),...,sf_{m}(t)\in M.$    Note by  Lemma \ref{lemm-1} again that
  \begin{eqnarray} \label{a-d-d1}&& M\ni S_\Theta^{0}\big(S_\Theta^{m+1}f(s,t)\big)=S_\Theta^{0}\big(-h(t)f_m(t)\big)=-sh(t)f_m(t),\\
 \label{a-d-d2} && M\ni S_\Theta^{m}f(s,t)=sf_m(t)-h(t)\big(msf_m(t)+f_{m-1}\big),\\
  \label{a-d-d3}&&M\ni h(t)f(s,t).\end{eqnarray}
Assume first  $m=1$.  By \eqref{a-d-d2} and \eqref{a-d-d3},   $sf_1(t)\in M$ and then $f_0(t)=f(s,t)-sf_1(t)\in M,$ proving the case with $m=1$. Assume $m\geq 2$. Then it follows from \eqref{a-d-d1} and \eqref{a-d-d2} that $sf_m(t)-h(t)f_{m-1}(t)\in M$. By the previous case, $sf_m(t)\in M$. Thus, $\sum_{i=0}^{m-1}s^if_i(t)\in M,$ which together with the inductive assumption gives that $f_0(t), sf_1(t),..., sf_{m-1}(t)\in M$.
\end{proof}

\begin{theo}\label{th-(b=1)}
The set $\{\mathcal S_{f(t)}+\mathcal S_{sg(t)}\mid f(t), g(t)\in\C[t]\}$ exhausts all $\V(0,1)$-submodules of $\Theta(\lambda,h)$.
\end{theo}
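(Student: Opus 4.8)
The plan is to mimic the structure of the proof of Theorem~\ref{theo-1(b=0)}, using Lemma~\ref{lemm1-(b=1)}(3) to reduce an arbitrary submodule to one generated by a single ``degree $\le 1$ in $s$'' element, and then to invoke Lemma~\ref{lemm1-(b=1)}(2) to split that generator. First I would observe that since each $\mathcal S_{f(t)}$ and each $\mathcal S_{sg(t)}$ is a $\V(0,1)$-submodule (they are cyclic, hence submodules by definition), any sum $\mathcal S_{f(t)}+\mathcal S_{sg(t)}$ is a $\V(0,1)$-submodule; this gives the ``easy'' inclusion that every set in the list is genuinely a submodule. The real content is the converse: every $\V(0,1)$-submodule $M$ of $\Theta(\lambda,h)$ has this form.

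So let $M$ be a nonzero $\V(0,1)$-submodule. By Lemma~\ref{lemm1-(b=1)}(3) applied to each element of $M$, the submodule $M$ is a sum of submodules of the form $\mathcal S_{s(1+h(t))p(t)-h(t)q(t)}$ and $\mathcal S_{(1+h(t))r(t)}$; by Lemma~\ref{lemm1-(b=1)}(2) each of these in turn splits as $\mathcal S_{su(t)}+\mathcal S_{v(t)}$ for suitable polynomials $u,v\in\C[t]$ (for the first type, take $f_1=(1+h)p$, $f_0=-hq$; for the second, $f_1=0$). Hence $M$ is a (possibly infinite) sum $M=\sum_\alpha \mathcal S_{su_\alpha}+\sum_\beta \mathcal S_{v_\beta}$. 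The next step is to collapse this to a single pair. Using Lemma~\ref{lemm1-(b=1)}(1), $\mathcal S_{su(t)}=\C[s,t]\,s u(t)+\C[s,t]\,h(t)u(t)$ and $\mathcal S_{v(t)}=\C[s,t]\,s v(t)+\C[s,t]\,h(t)v(t)$, so the ``$v$-part'' of $M$ interacts with the ``$su$-part''. I would let $f(t)=\gcd\{\,v_\beta(t),\ h(t)u_\alpha(t)\ :\ \alpha,\beta\,\}$ be the generator of the ideal of $\C[t]$ spanned by all the constant-in-$s$ polynomials lying in $M$, and let $g(t)=\gcd\{u_\alpha(t)\}$ be the generator of the ideal of ``coefficients of $s$'' appearing in $M$ (more precisely: $g$ generates the image of $M$ under the $\C[t]$-linear projection $\C[s,t]\to \C[t]\,s\oplus\C[t]$ onto the $s$-component, which one checks is a $\C[t]$-submodule of $\C[t]$, hence principal). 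Then $M$ contains $sg(t)$ modulo lower terms, i.e.\ $sg(t)+w(t)\in M$ for some $w(t)\in\C[t]$, and applying $S^1_\Theta$ together with left multiplication by $t$ (as in the last paragraph of the proof of Lemma~\ref{lemm1-(b=1)}) shows $sg(t),w(t)\in M$ separately; this is the point where one uses that $S^1_\Theta(sg+w)=sg-h(sg+w)$ and $h\cdot(sg+w)\in M$ force $sg(t)\in M$. Thus $\mathcal S_{sg}\subseteq M$ and $M/\mathcal S_{sg}$ is spanned by constants in $s$, forcing $M=\mathcal S_{sg}+\mathcal S_f$ where $f$ generates the constant part; finally one checks $\mathcal S_f=\mathcal S_{f(t)}$ in the notation of the theorem. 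Conversely one verifies the containment $M\supseteq \mathcal S_f+\mathcal S_{sg}$ is automatic and $M\subseteq\mathcal S_f+\mathcal S_{sg}$ follows because every element of $M$, decomposed via Lemma~\ref{lemm1-(b=1)}(2)--(3), has its $s$-coefficients divisible by $g$ and its constant term in the ideal $(f)$.

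The main obstacle I anticipate is bookkeeping the interaction between the two ideals: because $\mathcal S_{su(t)}$ already \emph{contains} $h(t)u(t)$ as a constant-in-$s$ element (Lemma~\ref{lemm1-(b=1)}(1)), the polynomial $f$ generating the constant part is not independent of $g$ — one always has $h(t)g(t)$ and $(1+h(t))\cdot(\text{constant terms})$ lurking in $M$, so the clean statement is that $f\mid \gcd(h\cdot g,\ \text{(genuine constant generators)})$ with no further constraint, and I must be careful that the final normalization ``$M=\mathcal S_{f}+\mathcal S_{sg}$'' does not secretly impose a divisibility relation between $f$ and $g$ (unlike the $b=0$ case, where Proposition~\ref{prop-1(b=0)} shows $g\in\{f,tf\}$). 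Verifying that \emph{every} pair $(f,g)$ is realized — equivalently that the list has no redundancy obligations to discharge here — is the delicate step; it should follow from Lemma~\ref{lemm1-(b=1)}(2) applied directly to $f(t)$ and $sg(t)$, giving $\mathcal S_f+\mathcal S_{sg}$ as an honest submodule with exactly these invariants.
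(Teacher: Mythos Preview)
Your approach is correct and essentially the same as the paper's: reduce via Lemma~\ref{lemm1-(b=1)}(2)(3) to cyclic pieces of the form $\mathcal S_{sp(t)}$ and $\mathcal S_{q(t)}$, then pick $f,g$ of minimal degree (the paper does this directly rather than through gcd's, and notes that $sg(t)\in W$ already by Lemma~\ref{lemm1-(b=1)}(2), so your $S^1_\Theta$ detour through $sg+w$ is unnecessary). Your closing worry is a misreading of the statement: the theorem only asserts that every submodule equals some $\mathcal S_{f}+\mathcal S_{sg}$, not that the parametrization by pairs $(f,g)$ is injective or surjective, so there is no ``realization'' step to discharge.
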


\begin{proof}
Let $M$ be a nonzero $\V(0,1)$-submodule of $\Theta(\lambda,h).$  Take $ f(t)\in M\cap \C[t]$ and $sg(t)\in M\cap s\C[t]$ such that deg$\,f(t)$ and deg$\,g(t)$ are minimal (note that here $f$ may be zero so that deg\,$f=\infty$).  Then $M\cap \C[t]\subseteq \mathcal S_{f(t)}$ by the choice of $f(t)$ and by \eqref{lemm1-(b=1)(1)}. Similarly, $M\cap s\C[t]\subseteq \mathcal S_{sg(t)}.$  These together with  \eqref{lemm1-(b=1)(3)} force that any element of $M$ must lie in $\mathcal S_{sg(t)}+\mathcal S_{f(t)}$. Thus, $M=\mathcal S_{sg(t)}+\mathcal S_{f(t)}$.
\end{proof}

Note from \eqref{operator-ST} that in the case $b=1$, we have
\begin{eqnarray*}
&&\dis T^j=\frac {t-\alpha}{j!}\partial_s^j,\nonumber\\
\label{sb=1}&&
S^j=\frac s{j!}\partial_s^j-\frac 1{(j-1)!}\partial_s^{j-1}\Big(G(t)+(t-\al)\partial_t\Big)\  {\rm with}\ G(t)=h(t)-\alpha g(t).
\end{eqnarray*}
In particular, $T^0=t-\alpha$  and $S^0=s.$ Thus, \begin{eqnarray}\label{4552-0}\C[s,t]f(s,t)=\C[s,t-\alpha]f(s,t)\subseteq \mathcal S_{f(s,t)}.\end{eqnarray}
Now we can determine   the $\V(0,1)$-submodule $\mathcal S_{f(s,t)}$ of $\Phi(\lambda,\alpha,h)$ as follows.
\begin{lemm}\label{lemm2-(b=1)}
\begin{itemize}
\item[\rm (1)] Let $p(t)=\sum_{i\geq n}a_i(t-\alpha)^i\in\C[t]$ for some  $a_i\in\C$ with $a_n\neq0$ and let $1\le m\in\Z_+.$ Then
\begin{eqnarray}\label{eqno-lemm11}
\mathcal S_{p(t)}=\mathcal S_{s^mp(t)}=\mathcal S_{(t-\alpha)^n}=\C[s,t](t-\alpha)^n.\end{eqnarray}

\item[\rm (2)] Let $f(s,t)=\sum_{i\ge p}\sum_{j\ge l}a_{ij}s^i(t-\alpha)^j\in\C[s,t] $ for some $a_{ij}\in\C$ with $a_{p l}\neq0.$ Then $\mathcal S_{f(s,t)}=\mathcal S_{(t-\alpha)^l}.$
\end{itemize}
\end{lemm}
\begin{proof}
(1) It is easy to check that $\C[s, t-\alpha](t-\alpha)^n$ is $S^j$ and $T^j$-invariant for any $j\in\Z_+$. Thus by Lemma \ref{lemm-1}, $\C[s, t-\alpha](t-\alpha)^n$ is a $\V(0,1)$-module. It   follows from this and \eqref{4552-0} that $\mathcal S_{(t-\alpha)^n}=\C[s,t](t-\alpha)^n$.

By Lemma \ref{lemm-1},  we have \begin{eqnarray}\label{4552-2}&&\mathcal S_{s^mp(t)}\ni T^m\big(s^mp(t)\big)=(t-\alpha)p(t) ,\\  &&\label{4552-3}\mathcal S_{s^mp(t)}\ni  -S^{m+1}\big(s^mp(t)\big)=\Big(\big(G(t)-G(\alpha)\big)+\big(G(\alpha)+(t-\alpha)\partial_t\big)\Big)p(t).\end{eqnarray}
Note that $\gamma(t):=\frac{G(t)-G(\alpha)}{t-\alpha}\in\C[t].$ Then by \eqref{4552-0} and \eqref{4552-2}, $$\big(G(t)-G(\alpha)\big)p(t)=\gamma(t)(t-\alpha)p(t)\in \mathcal S_{(t-\alpha)p(t)}\subseteq \mathcal S_{s^mp(t)},$$ which together with  \eqref{4552-3} implies that $\big(G(\alpha)+(t-\alpha)\partial_t\big)p(t)\in \mathcal S_{s^mp(t)}.$ Inductively, one has, for $k\geq1$,    $$\mbox{$\sum\limits_{i\geq n}$}\big(G(\alpha)+i\big)^ka_i(t-\alpha)^i= \big(G(\alpha)+(t-\alpha)\partial_t\big)^kp(t)\in \mathcal S_{s^mp(t)}.$$  Thus, $(t-\alpha)^n\in \mathcal S_{s^mp(t)}$ and therefore $\mathcal S_{(t-\alpha)^n}\subseteq S_{s^mp(t)}\subseteq S_{p(t)}\subseteq \mathcal S_{(t-\alpha)^n}$  by \eqref{4552-0}, proving
\eqref{eqno-lemm11}.

(2) Denote $q=\max\{i\mid a_{ij}\ne0$ for some $j\}$. The result follows from (1) if $q=p$. Assume $q\geq p+1$. Write $f(s,t)$ as $\sum_{i=p}^q s^if_i(t)$ with $f_i(t)=\sum_{j\ge l} a_{ij}(t-\alpha)^j$. Then it follows from the proof of the part ``\,$\mathcal S_{(t-\alpha)^n}\subseteq\mathcal S_{s^mp(t)}$\,'' in (1) that $s^qf_q(t)\in \mathcal S_{f(s,t)}$.    Inductively,  $s^if_i(t)\in \mathcal S_{f(s,t)}$ for $i=q-1,...,p$.  Then by (1), $\mathcal S_{(t-\alpha)^l}\subseteq \mathcal S_{f(s,t)}$. The converse inclusion  follows from \eqref{4552-0}.
\end{proof}

\begin{theo}\label{the0-(b=1)}
The set $\{\mathcal S_{(t-\alpha)^n}\mid n\in\Z_+\}$ exhausts all nonzero $\V(0,1)$-submodules of $\Phi(\lambda,\alpha,h)$.
\end{theo}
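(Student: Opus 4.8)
The plan is to show that every nonzero $\V(0,1)$-submodule $W$ of $\Phi(\lambda,\alpha,h)$ coincides with $\mathcal S_{(t-\alpha)^n}$ for a suitable $n\in\Z_+$, and that these are pairwise distinct. The inclusion-distinctness part is immediate from Lemma \ref{lemm2-(b=1)}(1): $\mathcal S_{(t-\alpha)^n}=\C[s,t-\alpha](t-\alpha)^n$, so $\mathcal S_{(t-\alpha)^{n}}\subsetneq\mathcal S_{(t-\alpha)^{m}}$ whenever $m<n$, and no two of them are equal. So the real content is that the list is exhaustive.

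First I would pick any $0\neq f(s,t)\in W$. Expanding $f$ about $t=\alpha$ and collecting powers of $s$, write $f(s,t)=\sum_{i=p}^q\sum_{j\ge n_i}a_{i,j}s^i(t-\alpha)^j$, and let $n$ be the minimal order of vanishing at $t=\alpha$ occurring among the coefficient polynomials $f_i(t)$ (i.e.\ the smallest $j$ with some $a_{i,j}\neq0$), after discarding any $s^i$-components that vanish. By Lemma \ref{lemm2-(b=1)}(3), $\mathcal S_{f(s,t)}=\mathcal S_{(t-\alpha)^n}$, hence $\mathcal S_{(t-\alpha)^n}\subseteq W$. This gives one inclusion and shows that the set of integers
\[
N(W)=\big\{n\in\Z_+\;:\;\mathcal S_{(t-\alpha)^n}\subseteq W\big\}
\]
is nonempty; let $n_0=\min N(W)$. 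The plan is then to prove $W=\mathcal S_{(t-\alpha)^{n_0}}$.

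The containment $\mathcal S_{(t-\alpha)^{n_0}}\subseteq W$ holds by definition of $n_0$. For the reverse, take an arbitrary $0\neq g(s,t)\in W$; by the paragraph above, $\mathcal S_{g(s,t)}=\mathcal S_{(t-\alpha)^{m}}\subseteq W$ where $m$ is the minimal vanishing order of $g$ at $t=\alpha$, so $m\in N(W)$ and therefore $m\ge n_0$. But $\mathcal S_{(t-\alpha)^m}=\C[s,t-\alpha](t-\alpha)^m\subseteq\C[s,t-\alpha](t-\alpha)^{n_0}=\mathcal S_{(t-\alpha)^{n_0}}$, hence $g(s,t)\in\mathcal S_{(t-\alpha)^{n_0}}$. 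Since $g$ was arbitrary, $W\subseteq\mathcal S_{(t-\alpha)^{n_0}}$, and equality follows.

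The only step requiring genuine care is justifying that for every single nonzero element $g\in W$ one has $\mathcal S_g=\mathcal S_{(t-\alpha)^m}$ with $m$ the minimal $(t-\alpha)$-order of $g$ — i.e.\ correctly reading off $m$ from a general polynomial in $s$ and $t$. But this is precisely the content of Lemma \ref{lemm2-(b=1)}(3) (together with (2)), which is already available, so there is no real obstacle: the argument is a short ``minimal invariant'' argument once that lemma is in hand. I would also remark explicitly that $W=0$ is excluded by hypothesis, and that the case $g\in\C[t]$ is covered by Lemma \ref{lemm2-(b=1)}(2) as the special case $q=0$ of (3).
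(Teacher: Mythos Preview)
Your proposal is correct and follows essentially the same approach as the paper: both arguments use Lemma \ref{lemm2-(b=1)}(3) to show that every nonzero element generates a cyclic submodule of the form $\mathcal S_{(t-\alpha)^m}$, then take the minimal such $n$ with $(t-\alpha)^n\in W$ and conclude $W=\mathcal S_{(t-\alpha)^n}$ by minimality. Your write-up is slightly more detailed (you also verify distinctness of the chain, which the paper does not address), but the underlying argument is the same.
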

\begin{proof}
Let $M$ be  a nonzero $\V(0,1)$-submodule of $\Phi(\lambda,\alpha,h)$. Choose $n\in\Z_+$ minimal such that $(t-\alpha)^n\in M$. It suffices to show  for any $f(s,t)=\sum_{i=p}^q\sum_{j=l}^k a_{ij} s^i(t-\alpha)^j\in M$  with $a_{pl}\ne0$, we must have  $f(s,t)\in \mathcal S_{(t-\alpha)^n}$.  
Thanks to  Lemma \ref{lemm2-(b=1)}\,(2), we have $f(s,t)\in \mathcal S_{f(s,t)}=\mathcal S_{(t-\alpha)^l}.$ The minimality of $n$ and \eqref{4552-0} imply that  $S_{(t-\alpha)^l}\subseteq \mathcal S_{(t-\alpha)^n}$.  Thus,  $f(s,t)\in\mathcal S_{(t-\alpha)^n}$.
\end{proof}

\subsection{$\V(0,b)$-submodules for $b\ne0,1$}

Note that if $b=-1$,
then by  \eqref{operator-ST} we have
\begin{eqnarray}
&\!\!\!\!\!\!\!\!\!\!\!\!\!\!\!\!&T^j=\frac t{j!}\partial_s^j+\frac\alpha{(j-1)!}\partial_s^{j-1},\nonumber  \\
\label{sb=-1}&\!\!\!\!\!\!\!\!\!\!\!\!\!\!\!\!&
S^j=\frac s{j!}\partial_s^j-\frac 1{(j-1)!}\partial_s^{j-1}\Big(t\big(g(t)-\partial_t\big)+h(\alpha)\Big)-\frac{1}{(j-2)!}\partial_s^{j-2}\alpha\big(g(t)-\partial_t\big).
\end{eqnarray}

\begin{theo}\label{theo-(b=-1}

\begin{itemize}
\item[\rm (1) ] $\Phi(\lambda,\alpha,h)$ is an irreducible $\V(0,-1)$-module if and only if $\alpha\neq0.$

\item[\rm (2)]  The set $\{\mathcal S_{t^i}=t^i\C[s,t]\mid i\in\Z_+\}$ exhausts all nonzero $\V(0,-1)$-submodules of $\Phi(\lambda,0,h)$ and also exhausts all $\V(0,b)$-submodules of $\Phi(\lambda,\alpha,h)$ when $b\neq \pm1,0$.

\end{itemize}
\end{theo}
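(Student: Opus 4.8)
Everything rests on Lemma~\ref{lemm-1}: a subspace is a submodule precisely when it is stable under all $S^j$ and $T^j$. In the cases at hand these operators collapse: for $b=-1$ with $\alpha=0$, and for $b\neq\pm1,0$, one has $T^j=\frac{t}{j!}\partial_s^j$ and $S^j=\frac{s}{j!}\partial_s^j-\frac1{(j-1)!}\partial_s^{j-1}\bigl(h(t)+bt\partial_t\bigr)$ (with $b=-1$ in the first case, using $tg(t)=h(t)-h(0)$), so the two situations in part (2) can be treated uniformly; for $b=-1$ with general $\alpha$ the extra terms $\frac{\alpha}{(j-1)!}\partial_s^{j-1}$ in $T^j$ and $\frac{\alpha}{(j-2)!}\partial_s^{j-2}(g(t)-\partial_t)$ in $S^j$ are exactly what forces irreducibility. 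A direct check (replacing $t$ by $t^i$) shows $\mathcal S_{t^i}\subseteq t^i\C[s,t]$, and since $S^0=s$, $T^0=t$ act by left multiplication, $t^i\C[s,t]=\C[s,t]\,t^i\subseteq\mathcal S_{t^i}$; hence $\mathcal S_{t^i}=t^i\C[s,t]$ is a submodule, so in part (2) only exhaustiveness needs proof, and in part (1) the same computation (with $i=1$) shows $t\C[s,t]$ is a submodule when $\alpha=0$.

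\textbf{Part (1).} Suppose $\alpha\neq0$ and let $M\neq0$ be a $\V(0,-1)$-submodule; pick $0\neq f=\sum_{i=0}^n s^if_i(t)\in M$ with $f_n\neq0$. Then $T^{n+1}f=\alpha f_n$, so $f_n\in M\cap\C[t]$ and $M\cap\C[t]\neq0$. Choose $0\neq p(t)\in M\cap\C[t]$ of minimal degree $d$. From $S^1p=-\bigl(tg(t)-t\partial_t+h(\alpha)\bigr)p$ and the fact that $M$ absorbs multiplication by $\C[t]$, we get $t\partial_t p\in M\cap\C[t]$; since $t\partial_t p-dp$ has degree $<d$, minimality gives $t\partial_t p=dp$, hence $p=ct^d$. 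Now $S^2p=-\alpha c\bigl(g(t)t^d-dt^{d-1}\bigr)\in M$, and $g(t)t^d\in\C[t]\,t^d\subseteq M$, so $dt^{d-1}\in M$; if $d\geq1$ this gives $t^{d-1}\in M$, and by downward induction $1\in M$, whence $M=\C[s,t]$. Thus $\Phi(\lambda,\alpha,h)$ is irreducible. Conversely, if $\alpha=0$ then $S^j(tg)$ and $T^j(tg)$ lie in $t\C[s,t]$ for every $g$ (as $(h(t)-t\partial_t)(tg)=t(h(t)g-g-t\partial_t g)$), so $t\C[s,t]$ is a proper nonzero submodule and $\Phi(\lambda,0,h)$ is reducible.

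\textbf{Part (2), exhaustiveness.} Fix $b=-1,\alpha=0$ or $b\neq\pm1,0$ and let $M\neq0$ be a submodule. Since $S^0=s$ and $T^0=t$ act by multiplication, $M$ is an ideal of $\C[s,t]$. For $0\neq f=\sum_{i=0}^n s^if_i\in M$ of $s$-degree $n$, $T^nf=tf_n\neq0$, so $M\cap\C[t]\neq0$; write $M\cap\C[t]=(r(t))$. From $S^1r=-\bigl(h(t)+bt\partial_t\bigr)r\in(r)$ and $h(t)r\in(r)$, $b\neq0$, we get $tr'\in(r)$, and comparing degrees forces $tr'=(\deg r)\,r$, so $r=ct^i$; thus $M\cap\C[t]=(t^i)$ and $t^i\C[s,t]\subseteq M$. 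It remains to show $M\subseteq t^i\C[s,t]$, which I would prove by induction on the $s$-degree $n$ of $f=\sum_{k=0}^n s^kf_k\in M$: the case $n=0$ is $M\cap\C[t]=(t^i)$; for $n\geq1$, $T^nf=tf_n\in(t^i)$ gives $t^{i-1}\mid f_n$, while $S^{n+1}f=-\bigl(h(t)+bt\partial_t\bigr)f_n\in M\cap\C[t]=(t^i)$; writing $f_n=t^{i-1}u$ and combining these two divisibilities (using $h(t)-h(0)\in t\C[t]$) pins down the constant term of $u$ and upgrades the conclusion to $t^i\mid f_n$, so $s^nf_n\in t^i\C[s,t]$ and $f-s^nf_n\in M$ has smaller $s$-degree, closing the induction. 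Hence $M=t^i\C[s,t]$.

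\textbf{Main obstacle.} Everything is routine once the simplified forms of $S^j,T^j$ are recorded, except the inclusion $M\subseteq t^i\C[s,t]$ in part (2): the point is to extract enough information from $f\in M$ about its leading $s$-coefficient $f_n$ to obtain the \emph{sharp} divisibility $t^i\mid f_n$ rather than only $t^{i-1}\mid f_n$. This forces one to use $T^nf$ and $S^{n+1}f$ together and to track carefully how the constant term $h(0)$ enters (so that a subcase analysis according to the value of $h(0)$, parallel to the $h(0)=0$ versus $h(0)\neq0$ dichotomy of Subsection~2.1, is likely to be needed here). This is the step I expect to require the most care.
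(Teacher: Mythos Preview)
Your Part (1) is correct and in one place cleaner than the paper's argument: to land in $\C[t]\cap M$ you apply $T^{n+1}f=\alpha f_n$ in one stroke, whereas the paper uses $-S^{n+2}(T^0)^mf$ for $m\gg0$.  After that both proofs descend to $1$; the paper does it by noting $\partial_t\gamma=g(t)\gamma+\alpha^{-1}S^2\gamma\in M$ for any $\gamma\in M\cap\C[t]$, while you first normalize a minimal-degree element to $ct^d$ via $t\partial_t$ and then apply $S^2$.  Both are fine.

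For Part (2) the paper does not argue directly at all: it records that in both cases $T^j=\tfrac{t}{j!}\partial_s^j$ and $S^j=\tfrac{s}{j!}\partial_s^j-\tfrac{1}{(j-1)!}\partial_s^{j-1}(H(t)+bt\partial_t)$ have exactly the shape of the $b=1$ operators with $t-\alpha$ replaced by $t$, and then simply invokes Lemma~\ref{lemm2-(b=1)} and Theorem~\ref{the0-(b=1)}.  The mechanism behind those results is the eigenvector separation $(c+bt\partial_t)^k\!\sum_i a_it^i=\sum_i(c+bi)^ka_it^i$, used to compute each cyclic submodule $\mathcal S_{f(s,t)}$ explicitly; exhaustiveness then follows by taking a minimal $n$ with $t^n\in M$.

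Your approach to Part (2) is genuinely different (direct induction on the $s$-degree rather than computing cyclic submodules), and the step you flag is a real gap, not just bookkeeping.  From $T^nf=tf_n\in(t^i)$ and $S^{n+1}f=-(h(t)+bt\partial_t)f_n\in(t^i)$ you obtain only
\[
t^{i-1}\mid f_n\qquad\text{and}\qquad \bigl(h(0)+b(i-1)\bigr)\cdot u(0)=0\quad(\text{writing }f_n=t^{i-1}u),
\]
so when $h(0)+b(i-1)=0$ there is no upgrade to $t^i\mid f_n$, and a case split on whether $h(0)=0$ does not isolate this.  Concretely, take $h(0)=0$ (so the bad index is $i=1$): then $h(t)\in t\C[t]$ and one checks immediately that $\C[s]s+t\C[s,t]$ is stable under every $T^j$ (image lies in $t\C[s,t]$) and every $S^j$ (since $S^j(s^k)=\binom{k}{j}s^{k-j+1}-\binom{k}{j-1}h(t)s^{k-j+1}$ and $(h(t)+bt\partial_t)(tg)\in t\C[s,t]$).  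Thus the element $f=s$ lies in a submodule $M$ with $M\cap\C[t]=(t)$, yet its top coefficient $f_1=1$ is not divisible by $t$: the very conclusion ``$t^i\mid f_n$'' you are trying to prove fails here.  So the induction as written cannot close using only $T^nf$ and $S^{n+1}f$; you would have to bring in the eigenvalue-separation computation of $\mathcal S_{f(s,t)}$ from Lemma~\ref{lemm2-(b=1)} (or some equivalent extra input) rather than the two operators you single out.
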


\begin{proof} Note that $t\C[s,t]$ is a proper $\V(0,-1)$-submodule of $\Phi(\lambda,0, h)$. Consider now $\alpha\neq0$. For any $f(s,t)=\sum_{i=0}^n s^if_i(t)\in\C[s,t]$ with $f_n(t)\neq0$, by Lemma \ref{lemm-1} we have $$\mathcal S_{f(s,t)}\ni -S^{n+2}(T^0)^mf(s,t)=\alpha \big(g(t)-\partial_t\big)t^mf_n(t),$$
which is nonzero for sufficiently large $m.$  In particular,  there exists  $0\neq\gamma(t)\in \C[t]\cap \mathcal S_{f(s,t)}.$ It follows from the facts ``\,$g(t)\gamma(t)=g(T^0)\gamma(t)\in \mathcal S_{f(s,t)}$ and  $\big(g(t)-\partial_t\big)\gamma(t)=-\frac{S^2}{\alpha}\gamma(t)\in \mathcal S_{f(s,t)}$\,'' that $\partial_t\gamma(t)\in\mathcal S_{f(s,t)}$. Thus, induction on $\deg\,\gamma$ gives that 
$1\in\mathcal S_{f(s,t)}.$ Then $s^it^j=(S^0)^i(T^0)^j1\in\mathcal S_{f(s,t)}$ for any $i,j\in\Z_+$ and therefore $\mathcal S_{f(s,t)}=\C[s,t]$. This shows that $\Phi(\lambda,\alpha,h)$ is irreducible when $\alpha\neq0$, proving (1).

If $(b, \alpha)=(-1,0)$ then by \eqref{sb=-1} we have \begin{eqnarray*}
T^j=\frac t{j!}\partial_s^j,\  \ \  \ S^j=\frac s{j!}\partial_s^j-\frac 1{(j-1)!}\partial_s^{j-1}\big(tg(t)+h(0)-t\partial_t\big),
\end{eqnarray*}
and if $b\neq \pm1,0$ then by \eqref{operator-ST}  we have  \begin{eqnarray*}
T^j=\frac {t}{j!}\partial_s^j,\  \ \ \ \  S^j=\frac s{j!}\partial_s^j-\frac 1{(j-1)!}\partial_s^{j-1}\big(h(t)+bt\partial_t\big)\end{eqnarray*} Then (2) follows from the proof of Lemma \ref{lemm2-(b=1)} and Theorem \ref{the0-(b=1)}.
\end{proof}

As a consequence of  Theorems \ref{theo-1(b=0)}, \ref{th-(b=1)}, \ref{the0-(b=1)} and \ref{theo-(b=-1} we have the following (see also \cite[Proposition 2.3\,(iii)]{HCS}).
\begin{coro}\label{coro-red}
For $\lambda\in\C^*, \alpha\in\C$ and $h(t)\in\C[t],$ the $\V(0,1)$-module $\Theta(\lambda,h)$ is reducible and the $\V(0,b)$-module $\Phi(\lambda,\alpha,h)$ is  irreducible if and only if   $\alpha\neq0$ and $b=-1.$
\end{coro}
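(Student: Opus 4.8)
The plan is to prove this corollary by assembling the classification results of Theorems \ref{theo-1(b=0)}, \ref{th-(b=1)}, \ref{the0-(b=1)} and \ref{theo-(b=-1}, proceeding case by case on the value of $b$. In each case the task reduces either to exhibiting one explicit nonzero proper submodule (when we want reducibility) or to quoting the irreducibility criterion already established (when $b=-1$). Since the submodule lattices have already been determined completely, no genuinely new computation is needed; the only point requiring care is to check that the candidate submodules produced are honestly proper and nonzero.

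First I would dispose of $\Theta(\lambda,h)$. By Lemma \ref{lemm1-(b=1)}(1) with $n=0$ we have $\mathcal S_{f(t)}=\C[s,t]f(t)$ for every $f(t)\in\C[t]$ (equivalently this is immediate from Theorem \ref{th-(b=1)}). Taking $f(t)=t$ yields the proper nonzero $\V(0,1)$-submodule $t\,\C[s,t]$ of $\Theta(\lambda,h)$, so $\Theta(\lambda,h)$ is reducible for all $\lambda$ and $h$. Next I would treat the reducible cases of $\Phi(\lambda,\alpha,h)$. When $b=0$, Theorem \ref{theo-1(b=0)} (together with the formula $\mathcal S_{f}=\C[s,t]f$) shows $\mathcal S_{t}=t\,\C[s,t]$ is a proper nonzero $\V(0,0)$-submodule. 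When $b=1$, Lemma \ref{lemm2-(b=1)}(1) (or Theorem \ref{the0-(b=1)}) gives the proper nonzero submodule $\mathcal S_{t-\alpha}=\C[s,t-\alpha](t-\alpha)$. When $b\neq\pm1,0$, Theorem \ref{theo-(b=-1}(2) lists all submodules as $\mathcal S_{t^i}=t^i\,\C[s,t]$, and $\mathcal S_t=t\,\C[s,t]$ is proper and nonzero. Hence in all these cases $\Phi(\lambda,\alpha,h)$ is reducible.

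It remains to handle $b=-1$. Here Theorem \ref{theo-(b=-1}(1) states exactly that $\Phi(\lambda,\alpha,h)$ is an irreducible $\V(0,-1)$-module if and only if $\alpha\neq0$; and when $\alpha=0$, Theorem \ref{theo-(b=-1}(2) again exhibits $\mathcal S_t=t\,\C[s,t]$ as a proper nonzero submodule, so $\Phi(\lambda,0,h)$ is reducible. Collecting the above, $\Phi(\lambda,\alpha,h)$ is irreducible precisely when $b=-1$ and $\alpha\neq0$, which combined with the reducibility of $\Theta(\lambda,h)$ is the assertion. There is essentially no obstacle beyond bookkeeping: the substance of the argument lies entirely in the four cited theorems, and the present proof only has to verify in each reducible case that one of the listed submodules ($t\,\C[s,t]$, $\C[s,t]t$, or $\C[s,t-\alpha](t-\alpha)$) is neither zero nor the whole module, which is clear.
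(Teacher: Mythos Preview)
Your proposal is correct and follows exactly the approach the paper intends: the paper itself gives no proof beyond declaring the corollary a consequence of Theorems \ref{theo-1(b=0)}, \ref{th-(b=1)}, \ref{the0-(b=1)} and \ref{theo-(b=-1}, and you have simply spelled out the case-by-case bookkeeping (exhibiting $t\,\C[s,t]$ or $(t-\alpha)\,\C[s,t]$ as a proper submodule in each reducible case, and invoking Theorem \ref{theo-(b=-1}(1) for $b=-1$) that the paper leaves implicit.
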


\section{$\mathscr V\!$-submodules }
  In this section for any $f(s,t)\in\C[s,t]$ we use  $\Psi_{f(s,t)}$ to  denote the $\mathscr V\!$-submodule of $\Phi(\lambda,\alpha, h)$ or  $\Theta(\lambda,h)$ generated by  $f(s,t)$.

A pair $(n,i)\in\Z_+^2$ with $i\le n$ is called {\em minimal} with respect to $ h(t)$ if for any other pair $(m,j)\in\Z_+^2$ such that $j\le m$ and $m\big({\rm deg}\, h(t)-1\big)+j=n\big({\rm deg}\, h(t)-1\big)+i$ one has $n\le m$.

\begin{lemm}\label{lemm-2} Let $b=-1,$ $\alpha\neq0$ and $f:=f(t)\in\C[t]$. Then
\begin{itemize}
\item[\rm (1)] $\Psi_{f}=\sum_{m=0}^\infty \sum_{i=0}^m\C[s]t^i\big(g(t)-\partial_t\big)^m f.$

\item[\rm (2)] $\Psi_{s^nf}=\C[s]sf+\Psi_{(g(t)-\partial_t)f}+\Psi_{(t(g(t)-\partial_t)+h(\alpha))f}$ for all $n\geq1$.

 \item[\rm (3)]  $\Psi_{f}$  has a basis $\mathcal B=\{s^lt^i(g(t)-\partial_t)^nf\mid l\in\Z_+, i\leq n,  (n,i)\in\Z_+^2\ is\ minimal\};$
  furthermore$,$ if $\deg\,h\ge2$ then the subspace  spanned by $\mathcal B-\{f\}$ is
 a maximal submodule of $\Psi_{f}$.
 In particular$,$ $\Psi_{f}$ is reducible if $\deg\, h\geq2$.
\end{itemize}

\end{lemm}
\begin{proof} Clearly, $\big(g(t)-\partial_t\big)t=t\big(g(t)-\partial_t\big)-1$, which holds on $\C[s,t]$. Note by Lemma \ref{lemm-1} and \eqref{sb=-1} that elements in $\Psi_{f}\cap \C[t]$ can  be  obtained by repeatedly applying $g(t)-\partial_t$ or $t\big(g(t)-\partial_t\big)$ to $f$.   Thus, $\Psi_{f}\cap \C[t]=\sum_{m=0}^\infty \sum_{i=0}^m\C t^i\big(g(t)-\partial_t\big)^m f.$ It is easy to check that $\sum_{m=0}^\infty \sum_{i=0}^m\C[s]t^i\big(g(t)-\partial_t\big)^m f$ is $S^j$-invariant for any $j\in\Z_+$ (cf. \eqref{sb=-1}),   and therefore is a $\mathscr V\!$-submodule containing $f$ by Lemma \ref{lemm-1}. It follows from these that  (1) holds.

Set $F=g(t)-\partial_t$. By Lemma  \ref{lemm-1}, \begin{eqnarray*}
 & &\Psi_{s^nf} \ni S^{n+2}(s^nf) =-\alpha Ff,\\
&&\Psi_{s^nf}\ni S^{n+1}(s^nf)=-\big(tF+h(\alpha)\big)f-n\alpha S^0(Ff),\\
&&\Psi_{s^nf}\ni S^{n}(s^nf)= sf-nS^0\Big(\big(tF+h(\alpha)\big)f\Big)-\binom{n}{2}\alpha (S^0)^2(Ff).
\end{eqnarray*}It follows from these that $Ff, \big(tF+h(\alpha)\big)f, sf\in \Psi_{s^nf}$ for all $n\geq1$.  In particular, $$\C[s]sf+\Psi_{Ff}+\Psi_{(tF+h(\alpha))f}\subseteq \Psi_{sf}\subseteq \Psi_{s^nf}.$$
Then (2) follows from this and the observation  that $\C[s]sf+\Psi_{Ff}+\Psi_{(tF+h(\alpha))f}$ is a $\mathscr V\!$-submodule  containing $sf$.

Assume  $2\le k:= {\rm deg}\, h(t)<\infty$. To show that $\mathcal B$ is a basis of $\Psi_{f},$ it is sufficient to show  that $\mathfrak M=\{t^i(g(t)-\partial_t)^nf\mid  i\leq n,  (n,i)\in\Z_+^2\ {\rm is\ minimal}\}$ is a basis of $$\Psi_{f}\cap \C[t]=\mbox{$\sum\limits_{m=0}^\infty \sum\limits_{i=0}^m$}\C t^i(g(t)-\partial_t)^m f.$$ The set $\mathfrak M$ is linearly independent, since elements in $\mathfrak M$ have distinct degrees.  Let $n\in\Z$. Note  that $$\Psi\cap \C[t]_{n+{\rm deg}\, f}={\rm span}\{ t^i(g(t)-\partial_t)^m f\mid 0\le i\le m, m(k-1)+i\le n\},$$  where in general $\C[t]_m$ is the subspace of $\C[t]$ consisting of polynomials whose degree is less than $m$ (here we assume ${\rm deg}\,0=-\infty$).  Using induction on $n$, we can assume that each element in $\Psi\cap \C[t]_{n+{\rm deg}\, f}$ is a linear combination of $\mathfrak M$ (which is obvious if $n=-\deg\,f-1$).
We show that $\Psi\cap \C[t]_{n+1+{\rm deg}\, f}$ is a linear combination of $\mathfrak M$.
 Clearly,  $$\Psi\cap \C[t]_{n+1+{\rm deg}\, f}=\Psi\cap \C[t]_{n+{\rm deg}\, f}\oplus{\rm span}_\C\{t^i(g(t)-\partial_t)^sf\mid 0\le i\leq s, s(k-1)+i=n+1\}.$$ Let $(s_0,i_0)$ be minimal among the set $\{(s,i)\mid 0\le i\le s, s(k-1)+i=n+1\}$. Note that for any  $(s,i)\in\Z_+^2$ such that $i\le s$ and $s(k-1)+i=n+1$, there exists $\zeta\in\C$ such that $${\rm deg}\,\Big(t^i\big(g(t)-\partial_t\big)^s f-\zeta t^{i_0}\big(g(t)-\partial_t\big)^{s_0}f\Big)\leq n+{\rm deg}\, f$$ that is  $t^i\big(g(t)-\partial_t\big)^s f-\zeta t^{i_0}\big(g(t)-\partial_t\big)^{s_0}f\in \Psi\cap \C[t]_{n+{\rm deg}\, f}.$   Then by the inductive assumption,  $t^i\big(g(t)-\partial_t\big)^s f$ is a linear combination of $\mathfrak M$, as desired.

Finally, it is easy to see that the linear span of $\mathcal B-\{f\}$ is $S^j$-invariant for $j\in\Z_+$, and thus is a $\mathscr V\!$-submodule by Lemma \ref{lemm-1}. The maximality  is trivial.
\end{proof}

\begin{theo}\label{th-irr}
The $\mathscr V\!$-module $\Phi(\lambda, \alpha,h)$ is irreducible if and only if $b=-1,$ $\alpha\neq0$ and ${\rm deg}\, h(t)=1$.
\end{theo}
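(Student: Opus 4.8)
The plan is to prove Theorem \ref{th-irr} by combining the reducibility criterion of Corollary \ref{coro-red} with the more refined information about cyclic $\mathfrak V$-submodules in Lemma \ref{lemm-2}. First note that Corollary \ref{coro-red} already shows that unless $b=-1$ and $\alpha\neq0$, the module $\Phi(\lambda,\alpha,h)$ is reducible even as a $\V(0,b)$-module, hence a fortiori reducible as a $\mathfrak V$-module (every $\V(0,b)$-submodule is in particular a $\mathfrak V$-submodule). So the necessity of the conditions $b=-1$ and $\alpha\neq0$ is immediate, and it only remains to analyze the case $b=-1$, $\alpha\neq0$ and decide exactly when $\Phi(\lambda,\alpha,h)$ is irreducible as a $\mathfrak V$-module; the claim is that this happens precisely when $\deg h(t)=1$.

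For the ``only if'' direction in this remaining case, I would argue that if $\deg h(t)\geq 2$ (note $\deg h(t)\geq 1$ automatically, since $h$ constant would make $g=0$ and the module degenerate — but more to the point we simply handle $\deg h\geq 2$ and $\deg h \le 1$ separately), then $\Psi_{f(t)}$ for any nonzero $f(t)\in\C[t]$ is a proper $\mathfrak V$-submodule by Lemma \ref{lemm-2}(3). In particular taking $f=1$ gives a proper nonzero submodule $\Psi_1\neq\C[s,t]$, so $\Phi(\lambda,\alpha,h)$ is reducible. This disposes of $\deg h(t)\geq 2$. One should also note the degenerate possibility $\deg h(t)=0$: if $h$ is a nonzero constant then $g(t)=0$ and the operator $S^j$ loses its $g$-terms, and if $h=0$ then again $g=0$; in either sub-case $t\C[s,t]$ — or a similar space — can be checked to be $S^j$- and $T^j$-stable (since $g-\partial_t$ reduces to $-\partial_t$ which preserves $t\C[t]$ only up to constants, so one must instead observe directly that the argument of Theorem \ref{theo-(b=-1}(1) producing $1\in\mathcal S_{f(s,t)}$ breaks down because it used $g\ne 0$). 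Cleanly: when $\deg h\le 0$ the proof of irreducibility in Theorem \ref{theo-(b=-1}(1) fails and in fact $\Psi_1$ is proper, so reducibility holds; hence $\deg h(t)=1$ is forced.

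For the ``if'' direction, assume $b=-1$, $\alpha\neq 0$, and $\deg h(t)=1$, say $h(t)=ct+d$ with $c\neq 0$, so $g(t)=c$ is a nonzero constant and $g(t)-\partial_t = c-\partial_t$. Let $M$ be a nonzero $\mathfrak V$-submodule and pick $0\neq f(s,t)=\sum_{i=0}^n s^if_i(t)\in M$ with $f_n(t)\neq 0$. Applying $S^{n+2}$ (whose leading behavior, by \eqref{sb=-1}, is governed by $-\partial_s^{n}\alpha(g(t)-\partial_t)$) repeatedly, I get a nonzero element of $M\cap\C[t]$: concretely $S^{n+2}$ kills the $s$-degree down and produces $\pm\alpha(c-\partial_t)f_n(t)$ up to lower-degree corrections; since $\alpha\ne0$ and $c-\partial_t$ is injective on $\C[t]$, iterating yields a nonzero polynomial $\gamma(t)\in M\cap\C[t]$. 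Now within $M\cap\C[t]$ the key point is that $\deg h=1$ makes the $\mathfrak V$-action on $\C[t]$ ``large enough'': from $\gamma(t)\in M$ and $S^1\gamma(t)= s\gamma'(t) - \big(h(t)+\alpha g(t)-\dots\big)\gamma(t)$ type relations (more precisely, using $S^1$ and $S^2$ from \eqref{sb=-1} in the form appearing in the proof of Theorem \ref{theo-(b=-1}(1): $(g(t)-\partial_t)\gamma(t) = -\tfrac{1}{\alpha}S^2\gamma(t)\in M$), one extracts $\partial_t\gamma(t)\in M$, then by induction $1\in M$; then $s^it^j=(S^0)^i(T^0)^j\cdot 1$ — wait, $T^0$ here is left multiplication by $t$ and $S^0$ by $s$, so this shows $\C[s,t]=M$. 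The one subtlety relative to Theorem \ref{theo-(b=-1}(1) is that there irreducibility as a $\V(0,-1)$-module was shown using both $S^j$ and $T^j$, while here we have only the $S^j$; so the main obstacle is to redo the reduction ``$\gamma(t)\in M \Rightarrow 1\in M \Rightarrow M=\C[s,t]$'' using $S^j$ alone, and it is exactly here that $\deg h(t)=1$ is used — the operators $h(t)\cdot{}$ and $(g(t)-\partial_t)\cdot{}$ obtained from $S^j$ generate, together with multiplication by $s$, enough of $\mathrm{End}(\C[s,t])$ because $h$ is a nonconstant \emph{linear} polynomial so that $h(t)\cdot\gamma - h(0)\gamma$ divided appropriately recovers $t\gamma$, letting us climb back up in $t$-degree. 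Verifying this last generation statement, and checking the degenerate $\deg h\le 0$ sub-cases of the converse, are the places where genuine (though short) computation is needed; everything else is a direct appeal to Corollary \ref{coro-red} and Lemma \ref{lemm-2}.
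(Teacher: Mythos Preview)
Your overall strategy matches the paper's: reduce to $b=-1$, $\alpha\neq 0$ via Corollary \ref{coro-red}, invoke Lemma \ref{lemm-2}(3) for $\deg h\ge 2$, treat $\deg h\le 0$ separately, and for $\deg h=1$ show that any nonzero $\mathfrak V$-submodule contains $1$ and hence is all of $\C[s,t]$. Your reduction to a nonzero $\gamma(t)\in M\cap\C[t]$ via $S^{n+2}f=-\alpha(\eta-\partial_t)f_n$ and then to $1\in M$ via $\partial_t\gamma=\eta\gamma-(\eta-\partial_t)\gamma\in M$ is exactly what the paper does (the paper phrases it as a minimality argument, but the computation is the same).

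There are, however, two genuine gaps in execution. First, the case $\deg h(t)\le 0$ is not actually handled: observing that ``the proof of irreducibility in Theorem \ref{theo-(b=-1}(1) breaks down'' is not a proof of reducibility, and your candidate $t\C[s,t]$ is, as you yourself note, \emph{not} $S^j$-stable. The paper's fix is one line: when $h\in\C$ one has $g=0$, and then \eqref{sb=-1} shows each $S^j$ maps $\C[s]$ to $\C[s]$ (every term involving $t$, $g$ or $\partial_t$ vanishes on $\C[s]$), so $\Psi_1=\C[s]$ is a proper submodule. Equivalently, Lemma \ref{lemm-2}(1) gives this directly since $(g-\partial_t)^m 1=0$ for $m\ge 1$. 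Second, in the sufficiency direction the passage from $1\in M$ to $M=\C[s,t]$ is not right as written: there is no operator ``multiplication by $h(t)$'' among the $S^j$, so the sentence about recovering $t\gamma$ from $h(t)\gamma-h(0)\gamma$ does not apply. What works (and is what the paper does) is to invoke Lemma \ref{lemm-2}(1) again: with $g(t)=\eta\in\C^*$ and $f=1$ one has $(g(t)-\partial_t)^m 1=\eta^m$, hence
\[
\Psi_1=\sum_{m\ge 0}\sum_{i=0}^m\C[s]\,t^i\,\eta^m=\C[s,t].
\]
Both gaps are easily closed, and once closed your argument coincides with the paper's.
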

\begin{proof}By Corollary \ref{coro-red},
the fact ``\,$\Phi(\lambda,\alpha,h)$ is irreducible as a $\mathscr V\!$-module\,'' can only occur when  $b=-1$ and $\alpha\neq0$. Thus we can assume $b=-1,\,\alpha\ne0$.
If  $h(t)\in\C$, then  $g(t)=0$ by \eqref{g-hi-}, and we obtain from Lemma \ref{lemm-2}\,(1) that $\C[s]=\Psi_{1}$ is a proper $\mathscr V\!$-submodule of $\Phi(\lambda,\alpha,h)$.
If $\deg\,h\ge2$, we obtain from Lemma \ref{lemm-2}\,(3) that $\Phi(\lambda,\alpha,h)$ is reducible.

It remains to show that $\Phi(\lambda, \alpha,h)$ is irreducible if ${\rm deg}\, h(t)=1$. Let $W$ be a nonzero $\mathscr V\!$-submodule of $\Phi(\lambda,\alpha,h)$. Note from by \eqref{g-hi-} that $\eta:=g(t)\in \C^*$ and also $0\neq W\cap \C[t]$. Take $0\neq f(t)\in W\cap \C[t]$ such that ${\rm deg}\, f(t)$ is minimal. Then $f(t)\in\C$, since by Lemma \ref{lemm-2}\,(1), $\partial_t f(t)=-(\eta-\partial_t)f(t)+\eta f(t)\in W\cap \C[t].$   Now by Lemma \ref{lemm-2}\,(1) again, we have $$W\supseteq\Psi_{f(t)}=\mbox{$\sum\limits_{m=0}^\infty \sum\limits_{i=0}^m\C[s]t^i(g(t)-\partial_t)^m f(t)=\sum\limits_{m=0}^\infty \sum\limits_{i=0}^m$}\C[s]t^i=\C[s,t],$$which implies that $W=\Phi(\lambda,\alpha,h)$.
Thus,  $\Phi(\lambda,\alpha,h)$ is irreducible when ${\rm deg}\, h(t)=1$.
\end{proof}

\begin{remark}\rm We remark that for the case $b=-1$,  $\mathscr V\!$-submodules$,$ especially maximal $\mathscr V\!$-submodules of $\Phi(\lambda,\alpha,h)$ were studied  in {\rm \cite{CG1}}$,$ in particular
 Theorem \ref{th-irr} was also obtained there. However it may be worthwhile to mention that  Lemma \ref{lemm-2}\,(3) solves one of the problems listed there.
\end{remark}

Note from \eqref{operator-ST} that if $b\neq -1$ then $S^j$ and $S^j_\Theta$ have the same form:
\begin{eqnarray}\label{H-h}
\!\!\!\!\!\!\!\!\!\!\!\!&&S^j_{\Theta}=\frac s{j!}\partial_s^j-\frac 1{(j-1)!}\partial_s^{j-1}H(t)\ {\rm with}\ H(t)=h(t),\nonumber \\
\!\!\!\!\!\!\!\!\!\!\!\!&&S^j=\frac s{j!}\partial_s^j-\frac 1{(j-1)!}\partial_s^{j-1}H(t)\ {\rm with}\ H(t)=\Big(h(t)-\delta_{b,1}\alpha g(t)+(bt-\delta_{b,1}\alpha)\partial_t\Big).
\end{eqnarray} Here we view $H(t)$ as an operator on  $\C[s,t]$.  Following the proof of Lemma \ref{lemm1-(b=1)} and using Lemma \ref{lemm-1} we can determine cyclic $\mathscr V\!$-submodules of $\Phi(\lambda,\alpha, h)$ and  $\Theta(\lambda,h)$ as follows.

\begin{prop}\label{lemm-02vbnot-1} Assume that $b\neq-1$. Let  $1\le m\in\Z_+, n\in\Z_+,$ $p(t)\in\C[t]$ and $f(s,t)=\sum_{i=0}^ms^if_i(t)\in\C[s,t]$ with $f_i(t)\in\C[t]$ for $i=0,...,m$.  Then \begin{eqnarray*}
&\!\!\!\!\!\!\!\!\!&
\Psi_{s^np(t)}= \C[s]s^{1-\delta_{n,0}}p(t)+\mbox{$\sum\limits_{i=1}^\infty$}\C[s]\big(H(t)\big)^ip(t),\\
&\!\!\!\!\!\!\!\!\!&
\Psi_{f(s,t)}=\mbox{$\sum\limits_{i=1}^{m}$}\Psi_{sf_i(t)}+\Psi_{f_0(t)}. \end{eqnarray*}




\end{prop}

Now we can prove the following.

\begin{theo}\label{th-vir1}
Each $\mathscr V\!$-submodule of $\Phi(\lambda,\alpha,h)$ or $\Theta(\lambda,h)$ is finitely generated if and only if ${\rm deg}\, h(t)\geq1.$
\end{theo}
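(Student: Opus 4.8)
The plan is to prove both directions of the equivalence by distinguishing the two possible ``sizes'' of $\deg h(t)$, namely $\deg h(t)=0$ (i.e.\ $h(t)$ constant, possibly zero) versus $\deg h(t)\ge 1$, and within each case to treat the modules $\Phi(\lambda,\alpha,h)$ and $\Theta(\lambda,h)$ together as far as possible, invoking the decomposition formulas of Proposition~\ref{lemm-02vbnot-1} (for $b\neq-1$) and Lemma~\ref{lemm-2} together with Lemma~\ref{lemm1-(b=1)} (for $b=-1$ and for $\Theta$). Recall that, by the classification of submodules already obtained (Theorems~\ref{theo-1(b=0)}, \ref{th-(b=1)}, \ref{the0-(b=1)}, \ref{theo-(b=-1}) and Proposition~\ref{lemm-02vbnot-1}, every $\mathfrak V$-submodule of $\Phi(\lambda,\alpha,h)$ or $\Theta(\lambda,h)$ is a (possibly infinite) sum of cyclic submodules of the form $\Psi_{g(t)}$ and $\Psi_{sg(t)}$ with $g(t)\in\C[t]$; so finite generation of an arbitrary submodule reduces to understanding the poset of such cyclic pieces.

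For the ``only if'' direction I would assume $\deg h(t)=0$ and exhibit a $\mathfrak V$-submodule that is not finitely generated. When $h(t)=c$ is a nonzero constant: for $\Theta(\lambda,h)$ one has $H(t)=c$, so Proposition~\ref{lemm-02vbnot-1}(1) gives $\Psi_{f(t)}=\C[s]f(t)$, hence the submodules $\C[s]\,f(t)$ for $f(t)$ running over, say, all $t^j$, generate a strictly increasing chain whose union $\bigoplus_j\C[s]t^j=\C[s,t]$ cannot be finitely generated since any finite set of generators lies in $\sum_j\C[s]t^j$ for finitely many $j$. The same argument applies to $\Phi(\lambda,\alpha,h)$ for $b\neq-1$ (again $H(t)$ has no $\partial_t$-part that helps once $h$ is constant, so the relevant operators only multiply by constants and differentiate in $s$), and for $b=-1$, $\alpha\neq0$ one uses that $\Phi$ is then already irreducible so the statement ``$\deg h\ge1$'' is forced, while for $b=-1,\alpha=0$, $h$ constant, Lemma~\ref{lemm-2}(1) shows $\Psi_{t^j}=t^j\C[s]$ and one again gets an infinite strictly ascending chain. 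The case $h\equiv 0$ is handled the same way. The upshot: when $\deg h=0$ the module $\C[s,t]$ itself (or a suitable submodule) is a countable union of a strictly increasing chain of cyclic submodules, hence not finitely generated.

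For the ``if'' direction, assume $\deg h(t)=k\ge1$ and let $W$ be an arbitrary $\mathfrak V$-submodule. By the decomposition results, $W$ is spanned by cyclic submodules $\Psi_{g(t)}$, $\Psi_{sg(t)}$; the key finiteness input is that when $\deg H(t)\ge1$ the successive powers $H(t)^i f(t)$, and in the $b=-1$ case the operators $(g(t)-\partial_t)^m f(t)$, have \emph{strictly increasing degrees in $t$}, so each cyclic submodule $\Psi_{g(t)}$ is, as a $\C[s]$-module, free on the images $\{H^i g\}_{i\ge0}$ (or $\{t^i(g-\partial_t)^m g\}$), a ``staircase'' in $(\deg_s,\deg_t)$-bidegree. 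Intersecting $W$ with $\C[t]$ and with $s\C[t]$ and choosing elements of minimal $t$-degree there — exactly as in the proofs of Theorems~\ref{th-(b=1)} and \ref{the0-(b=1)} — one shows $W$ is already generated by at most two elements: one from $W\cap\C[t]$ of minimal degree and one from $W\cap s\C[t]$ of minimal degree, the point being that every higher-degree element is absorbed into the cyclic submodules generated by these two via the degree-monotonicity of the operators. (For $b=-1$, $\alpha\neq0$, $\deg h\ge1$: if $\deg h=1$ the module is irreducible hence cyclic by Theorem~\ref{th-irr}; if $\deg h\ge2$ one uses the explicit basis $\mathcal B$ of $\Psi_{f(t)}$ from Lemma~\ref{lemm-2}(3) and the reduction $\Psi_{s^nf}=\Psi_{sf}$ from Lemma~\ref{lemm-2}(2) to see any submodule is generated by finitely many — in fact one or two — such pieces.) Thus every submodule is finitely generated.

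The main obstacle I anticipate is the bookkeeping in the ``if'' direction for $\Phi(\lambda,\alpha,h)$ with $b\neq\pm1,0$ and with $b=1$, where $H(t)=h(t)+(bt-\delta_{b,1}\alpha)\partial_t$ is a first-order differential operator rather than a polynomial: one must verify that iterating $H$ (and the shifted multiplications by $t-\alpha$ in the $b=1$ case) still produces a degree-monotone staircase so that the ``minimal-degree generator'' argument closes, and one must check that the two-generator bound is uniform and does not degenerate when $h(\alpha)=0$ or $h(0)=0$. This is where I would be most careful to align the argument with Proposition~\ref{lemm-02vbnot-1}(3): reduce $\Psi_{f(s,t)}$ to a finite sum of $\Psi_{sg_i}$ and $\Psi_{g_0}$, then show each of these is swallowed by the two canonical generators of $W$. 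Everything else is a repackaging of lemmas already proved.
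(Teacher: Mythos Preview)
Your overall strategy matches the paper's, but the ``if'' direction contains a genuine error: the claim that $W$ is generated by \emph{at most two} elements, one of minimal $t$-degree in $W\cap\C[t]$ and one in $W\cap s\C[t]$, fails whenever $k=\deg h\ge 2$. Concretely, take $\Theta(\lambda,h)$ with $h(t)=t^2$, so $H(t)=t^2$, and let $W=\C[s,t]$. The minimal-degree element of $W\cap\C[t]$ is (up to scalar) $1$, and by Proposition~\ref{lemm-02vbnot-1}(1) one has $\Psi_1=\sum_{i\ge0}\C[s]t^{2i}=\C[s][t^2]$; the minimal-degree element of $W\cap s\C[t]$ is $s$, and $\Psi_s=\C[s]s+\sum_{i\ge1}\C[s]t^{2i}=\C[s][t^2]$ as well. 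Hence $\Psi_1+\Psi_s=\C[s][t^2]\subsetneq\C[s,t]$. The point you are missing is that applying $H$ increases $t$-degree by exactly $k$, so the set $I=\{\deg f:0\ne f\in W\cap\C[t]\}$ is only closed under $+k$, not under $+1$; it is a finite union of arithmetic progressions with common difference $k$, and you need one generator for each progression (and similarly for the set $J$ coming from $W\cap(s\C[t]+\C[t])$). This is exactly what the paper does: it picks $p_1,\dots,p_n\in W\cap\C[t]$ with degrees $d_1,\dots,d_n$ so that $I=\bigcup_i\{d_i+mk:m\in\Z_+\}$, and analogous $sq_j+r_j$ for $J$, then shows these finitely many elements generate $W$ via Proposition~\ref{lemm-02vbnot-1}. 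Your degree-monotonicity idea is the right engine, but it proves finite generation, not two-generation.

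There is also a slip in the ``only if'' direction for $b=-1$, $\alpha\neq0$, $h$ constant: you write that $\Phi$ is ``already irreducible so $\deg h\ge1$ is forced,'' but by Theorem~\ref{th-irr} the $\mathfrak V$-module $\Phi(\lambda,\alpha,h)$ is irreducible only when $\deg h=1$; for $h\in\C$ it is reducible (indeed $\Psi_1=\C[s]$ is a proper submodule, since $g(t)=0$). The correct argument here is the same as in the other cases: when $h$ is constant, each cyclic $\Psi_{f(s,t)}$ meets $\C[t]$ in polynomials of only finitely many degrees (for $b=-1$ use Lemma~\ref{lemm-2}(1) with $g=0$, noting $(g-\partial_t)^m=(-\partial_t)^m$ lowers degree), so no finite sum of cyclic submodules can exhaust $\C[s,t]$. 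Also note that Lemma~\ref{lemm-2} is stated under the hypothesis $\alpha\neq0$, so your citation of it for $b=-1,\alpha=0$ needs to be replaced by the simplified operator formulas in the proof of Theorem~\ref{theo-(b=-1}(2).
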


\begin{proof}
Clearly,  ${\rm deg}\, h(t)={\rm deg}\, H(t)$ by \eqref{H-h} (note that deg\,$\partial_t=-1$). Assume first  $h(t)\in\C$. Note from Proposition \ref{lemm-02vbnot-1}  that   each $\mathscr V\!$-submodule $\Psi_{f(s,t)}$  contains  polynomials in $A[t]$ of only finitely many different degrees on $t$ (where $A=\C[s]$). Thus,  $\Phi(\lambda,\alpha,h)$ or $\Theta(\lambda,h)$ can not be finitely generated.

Consider now $1\le k:={\rm deg}\, h(t)<\infty$.  Let $M$ be a nonzero $\mathscr V\!$-submodule of $\C[s,t]$. We need to show that $M$ is finitely generated.  Below we only consider the  case $b\neq-1$ as the similar arguments can be applied to the case $b=-1$ (or see \cite{CG1}).   Set $$I=\{{\rm deg}\, f\mid 0\neq f\in\C[t]\cap M\},\ J=\{{\rm deg\,} f\mid sf(t)\in s\C[t]\cap M, f(t)\neq0\}.$$ Note from Proposition \ref{lemm-02vbnot-1} that $I=\cup_{i=1}^n\{d_i+lk\mid l\in\Z_+\}$ for some $d_i\in I$. Choose $p_i\in \C[t]\cap M$ such that ${\rm deg}\, p_i=d_i$ for $i=1,2,\ldots,n$. Similarly, we can choose $sq_i(t)\in s\C[t]\cap M$ for $i=1,2,\ldots, m$ such that $J=\cup_{i=1}^m\{{\rm deg}\, q_i+lk\mid l\in\Z_+\}.$ Let $W$ be the $\mathscr V\!$-submodule of $M$ generated by   $\{p_i(t), sq_j(t)\mid i=1,2,\ldots,n, j=1,2,\ldots, m\}$.
Take any $0\neq f(t)\in \C[t]\cap M.$ Then by induction on ${\rm deg}\, f$ one can show that  $f(t)\in  \sum_{i=1}^n \sum_{j=0}^\infty\C H(t)^jp_i(t).$   Thus, $f(t)\in \sum_{i=1}^n \Psi_{p_i(t)}\subseteq W$  by Proposition  \ref{lemm-02vbnot-1}, that is, $\C[t]\cap M\subseteq W$.   Similarly, we have $s\C[t]\cap M\subseteq W$. These and Proposition \ref{lemm-02vbnot-1} imply that $M=W$, i.e., $M$ is  finitely generated.
\end{proof}


\section {New irreducible $\mathscr V\!$-modules}\setcounter{clai}{0}
Motivated by \cite{TZ1} we consider the tensor product $\mathscr V\!$-modules  $\bigotimes_{i=1}^n \Phi(\lambda_i,\alpha_i,h_i)\otimes V$ in this section, where $V$ is an irreducible $\mathscr V\!$-module satisfying the condition that there exists $R_V$ such that $L_k$ acts locally finite on $V$ for all $k\geq R_V$. We remark that the tensor product modules  were studied in \cite{GLW} for the case $n=1$ and  in   \cite{LGW} in some general cases.  

We first recall all various known irreducible non-weight $\mathscr V\!$-modules, and then show that modules  under our consideration are irreducible and also give the necessary and sufficient conditions under which two of  these modules are isomorphic. Finally we shall  compare  these modules with the known non-weight modules.

Recall from \cite{LZ2} that for $\lambda\in\C^*$ and $b\in\C$, one can define a $\mathscr V\!$-module structure on $\C[t]$, which is denoted as  $\Omega(\lambda, b)$, as follows:
 for $i\in\Z_+, n\in\Z$,  \begin{equation}\label{lll-dda} L_n t^i=\lambda^n(t-n)^i\Big(t+n(b-1)\Big),\ \ \ \ C t^i=0.\end{equation}
Note that $\Omega(\lambda, b)$ is irreducible if and only if $b\neq1$.

Let $\mathscr V\!_+$ denote the subalgebra of $\mathscr V\!$ spanned  by $L_i$ for $i\in\Z_+$. For any $\mathbbm c\in\C$ and  $\mathscr V\!_+$-module $N$, form the induced $\mathscr V\!$-module ${\rm Ind}(N):=U(\mathscr V\!)\otimes_{U(\mathscr V\!_+)} N$. Denote $${\rm Ind}_\mathbbm c(N)={\rm Ind}(N)/(C-\mathbbm c){\rm Ind}(N),$$ where $U(\mathscr V\!)$ is the universal enveloping algebra of $\mathscr V\!$. Recall  from \cite{LZ2} that
for any $b^\prime\in\C$ and any irreducible $\C[t^{\pm1}, t\frac{d}{dt}]$-module $A$, there is a $\mathscr V\!$-module   on the vector space $A$ such that for $n\in\Z, w\in A$,   $$ L_n w=\big(t^n(t\frac{d}{dt})+nb^\prime t^n\big)w,\ \ \ \ C w=0,$$ which is denoted by $A_{b^\prime}.$   And $A_{b^\prime}$ is an irreducible $\mathscr V\!$-module if and only if one of the following conditions holds: (1) $b^\prime\neq 0\ \mbox{or}\ 1$; (2) $b^\prime=1$ and $t\frac{d}{dt} A =A$; (3) $b^\prime=0$ and $A$ is not isomorphic to the natural $\C[t^{\pm1}, t\frac{d}{dt}]$-module $\C[t,t^{-1}]$.

Let $r\in\Z_+$ and let $\mathscr V\!_r$ be the ideal of $\mathscr V\!_+$ spanned by $\{L_i\mid i> r\}$. Denote $\mathfrak a_r=\mathscr V\!_+/\mathscr V\!_r $. Let $M$ be an irreducible $\mathfrak a_r$-module such that $\bar L_r$ is injective on $M$, where $\bar L_i=L_i+\mathscr V\!_r$. Recall from
\cite{LLZ} that for any $\gamma(t)\in\C[t]\setminus\C$, the linear tensor product, denoted as $\mathcal N(M,\gamma(t)),$ of an $\mathfrak a_r$-module $M$ with the  Laurent polynomial ring $\C[t^{\pm1}]$, carries the structure of an irreducible $\mathscr V\!$-module with the action of $\mathscr V\!$ on $\mathcal N(M,\gamma(t))$ being defined, for  $k,n\in\Z, v\in M$, by  \begin{equation}\label{mag} L_n(v\otimes t^k)=\Big(kv+\mbox{$\sum\limits_{i=0}^r$}\frac{n^{i+1}}{(i+1)!} \bar L_i v\Big)\otimes t^{n+k}+v\otimes t^{n+k}\gamma(t),\ \ \ \ C(v\otimes t^k)=0.\end{equation} We call a $\mathscr V\!_r$-module $V$  {\em locally finite} if for any $v\in V$ the dimension of $U(\mathscr V\!_r)v$ is finite and call $V$  {\em locally nilpotent} if for any $v\in V$ there exists $n\in\Z_+$ such that $L_{i_1}L_{i_2}\cdots L_{i_n} v=0$ for any $L_{i_j}\in \mathscr V\!_r$.

We also need to recall irreducible highest weight modules over $\mathscr V\!$ (cf. \cite{KR}).  For any $\mathbbm c,\mathbbm h\in\C,$ let $I(\mathbbm c,\mathbbm h)$ be the left idea of $U(\mathscr V\!)$  generated by the set $\{L_0-\mathbbm h, C-\mathbbm c, L_i\mid i\geq1\}$. Form the quotient $M(\mathbbm c,\mathbbm h):= U({\mathscr V\!})/ I(\mathbbm c,\mathbbm h),$ which is called the Verma module with highest weight $(\mathbbm c,\mathbbm h).$ Let $W(\mathbbm c,\mathbbm h)$ be the unique maximal proper submodule of $M(\mathbbm c, \mathbbm h)$ and  $L(\mathbbm c,\mathbbm h)$ denote the irreducible highest weight module $M(\mathbbm c,\mathbbm h)/W(\mathbbm c,\mathbbm h)$ with highest weight $(\mathbbm c,\mathbbm h)$.

\begin{theo}{\rm  \cite[Theorems 1, 2 and Proposition 4]{MZ}}\label{th-cited} Let $N$ be an irreducible $\mathscr V\!_+$-module and $V$  an irreducible $\mathscr V\!$-module.
\begin{itemize}
\item[\rm (1)] If there exists $k\in\Z_+\setminus\{0\}$ satisfying the following two conditions:
\begin{itemize}
\item[{\rm (a)}] $L_k$ acts injectively on $N;$

\item[{\rm (b)}] $L_i N=0$ for all $i>k$.
\end{itemize}
\noindent Then for any $\mathbbm c\in\C$ the $\mathscr V\!$-module ${\rm Ind}_\mathbbm c(N)$ is irreducible.

\item[\rm (2)]  The following conditions are equivalent$:$
\begin{itemize}
\item[{\rm (i)}] There exists $l\in\Z_+$ such that each $L_m$ acts locally finite on $V$ for any $m\geq l;$
\item[{\rm(ii)}] There exists  $k\in\Z_+$ such that $V$ is a locally finite $\mathscr V\!_k$-module$;$
\item[{\rm (iii)}] There exists  $n\in\Z_+$ such that $V$ is a locally nilpotent $\mathscr V\!_n$-module$;$
\item[{\rm (iv)}] $V\cong L(\mathbbm c,\mathbbm h)$ or ${\rm Ind}_\mathbbm c(W)$ for some $\mathbbm c,\mathbbm h\in\C,$ and for some irreducible $\mathscr V\!_+$-module $W$ satisfying the conditions  in {\rm (1)}.
\end{itemize}
\end{itemize}
\end{theo}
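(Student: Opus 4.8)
This is \cite{MZ}; we only indicate the strategy. For part (1) the plan is to work with the PBW realization ${\rm Ind}_{\mathbbm c}(N)\cong U(\mathfrak V_-)\otimes_{\C}N$ as a vector space, where $\mathfrak V_-:={\rm span}\{L_{-i}\mid i\geq 1\}$, so that $\mathfrak V=\mathfrak V_-\oplus\mathfrak V_+\oplus\C C_1$. On the ordered monomials $L_{-i_1}L_{-i_2}\cdots L_{-i_s}$ with $i_1\geq i_2\geq\cdots\geq i_s\geq 1$ one puts the grading $d=\sum_j i_j$, refined by a total order that also records the length $s$, and extends this to a notion of leading monomial of an arbitrary nonzero element of ${\rm Ind}_{\mathbbm c}(N)$ written in this basis. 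Given a nonzero $\mathfrak V$-submodule $W$, pick $0\neq w\in W$ with minimal leading monomial. The core step is to show this forces $w\in 1\otimes N$: if some negative generator $L_{-i}$ ($i\geq 1$) still occurs in $w$, one applies a suitably chosen $L_m$ and commutes it rightward through the negative generators via $[L_m,L_{-i}]=-(m+i)L_{m-i}$; the contribution that runs straight into $N$ vanishes since $L_jN=0$ for $j>k$ once $m>k$, while among the commutator contributions exactly one survives, is controlled by the injective operator $L_k$, and has a strictly smaller leading monomial, contradicting minimality. Once $0\neq w\in 1\otimes N$, irreducibility of $N$ over $\mathfrak V_+$ gives $1\otimes N\subseteq W$, hence $W=U(\mathfrak V_-)(1\otimes N)={\rm Ind}_{\mathbbm c}(N)$. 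The main obstacle here is purely combinatorial: choosing $m$ as a function of the indices occurring in the leading monomial, and fixing the total order, so that the reduction is unambiguous and terminates.

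For part (2) the plan is a ``diamond'' of implications around (iii). The implications (iii)$\Rightarrow$(i) and (iii)$\Rightarrow$(ii) are direct computations: for $L(\mathbbm c,\mathbbm h)$ the operators $L_i$, $i\geq 1$, are locally nilpotent by the highest-weight structure while the $L_0$-grading is bounded below with finite-dimensional graded pieces, which gives both (ii) and (i); for ${\rm Ind}_{\mathbbm c}(W)$ one checks on the PBW basis of part (1) that, for $n$ large relative to $k$, repeated application of the generators of $\mathfrak V_n$ drives every index that occurs above $k$, so that every fixed vector is annihilated after finitely many steps, yielding local nilpotence of $\mathfrak V_n$ on ${\rm Ind}_{\mathbbm c}(W)$ and hence (i) and (ii).

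The substantive direction is (i)$\Rightarrow$(iii), with (ii)$\Rightarrow$(iii) handled in parallel but starting one step later. The plan is: from local $\mathfrak V_k$-finiteness, first show — using the bracket relations of $\mathfrak V$ together with the irreducibility of $V$ — that $\mathfrak V_m$ acts locally nilpotently on $V$ for all sufficiently large $m$. Fixing such an $m$, the socle $N:=\{v\in V\mid \mathfrak V_m v=0\}$ is then nonzero, and one checks it is a $\mathfrak V_+$-submodule of $V$ on which all $L_i$ with $i>m$ vanish. Passing to an irreducible $\mathfrak V_+$-submodule $N'$ of $N$ and letting $k'$ be the largest index with $L_{k'}N'\neq 0$ (if any), one splits into two cases: either $N'$ is one-dimensional with $L_iN'=0$ for all $i\geq 1$, so that $V$ contains a highest weight vector and $V\cong L(\mathbbm c,\mathbbm h)$; or $k'\geq 1$ and $L_{k'}$ is injective on $N'$ (its kernel being a proper $\mathfrak V_+$-submodule of $N'$), in which case the canonical map ${\rm Ind}_{\mathbbm c}(N')\to V$ is surjective by irreducibility of $V$ and injective by the PBW/leading-monomial argument of part (1), so that $V\cong{\rm Ind}_{\mathbbm c}(N')$ with $N'$ satisfying the hypotheses of (1). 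I expect the hard part to be the very first step of this direction — producing, out of mere local $\mathfrak V_k$-finiteness, a single $m$ for which $\mathfrak V_m$ is locally nilpotent on all of $V$ and the associated socle is well behaved; this is exactly the point where the $\mathfrak V$-relations and the finiteness hypothesis interact, and it is carried out in detail in \cite{MZ}.
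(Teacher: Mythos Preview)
The paper does not give its own proof of this theorem: it is stated with the citation \cite{MZ} in the heading and is followed immediately by Remark~\ref{rem-s5}, with no proof environment in between. So there is nothing in the present paper to compare your argument against; the result is simply quoted from Mazorchuk--Zhao.

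That said, your sketch is a faithful outline of the Mazorchuk--Zhao strategy. For part~(1) the PBW/leading-monomial reduction you describe is exactly the mechanism used there, and your identification of the combinatorial point (choosing $m$ and the total order so that a single commutator term survives and is governed by the injective $L_k$) is the genuine crux. For part~(2) your ``diamond'' is also the right shape; the one place to be a bit more careful is the step ``passing to an irreducible $\mathfrak V_+$-submodule $N'$ of $N$'': since $N$ need not be finite-dimensional, the existence of an irreducible $\mathfrak V_+$-submodule is not automatic and in \cite{MZ} is obtained via the structure of $N$ as a module over the finite-dimensional quotient $\mathfrak a_r$ together with the injectivity/nilpotency dichotomy for $L_r$. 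With that caveat, your plan matches the original.
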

\begin{nota}\label{rem-s5}\rm
\begin{itemize}
\item[(1)]
Assume $V$ is an irreducible $\mathscr V\!$-module for which there exists $R_V\in\Z_+$ such that $L_i$ for all $i>R_V$ are locally finite on $V$. By Theorem \ref{th-cited}\,(2)  for any $v\in V,$ there exists $K\in\Z_+$ such that $L_iv=0$ for all $i\geq K$. The minimal  such $K$ for $v$ is denoted by $K_v$.
\item[(2)] For any $1\le n\in\Z_+$, denote ${\bf 1}_n=\underbrace{1\otimes 1\otimes \cdots \otimes 1}_n.$
\end{itemize}
\end{nota}

The assertion in \cite[Proposition 7]{LZ1} can be 
generalized as follows.
\begin{lemm}\label{LZ1-cited-s5}
Let $P$ be a vector $\C$-space and $P_1$ a subspace of $P.$ Suppose  $\lambda_1,\lambda_2,\ldots,\lambda_s\in\C^*$ are pairwise distinct$,$ and assume $v_{ij}\in P$ and $f_{ij}(t)\in\C[t]$ with ${\rm deg}\, f_{ij}(t)=j$ for $i=1,2,\ldots, s,\, j=0,1,2,\ldots, k.$ If $$\mbox{$\sum\limits_{i=1}^s\sum\limits_{j=0}^k$}\lambda_i^mf_{i,j}(m)v_{i,j}\in P_1\quad {\it for} \ L< m\in\Z
,$$
where $L$ is any fixed integer or $-\infty,$ then $v_{ij}\in P_1$ for all possible $i,j$.
\end{lemm}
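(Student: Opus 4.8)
The statement to prove is Lemma~\ref{LZ1-cited-s5}, a Vandermonde-type linear independence result. The plan is to reduce it to two classical facts: that distinct geometric sequences $(\lambda_i^m)_m$ are linearly independent over $\C$, and that among the polynomials $\{f_{i,j}(t)\mid j=0,1,\dots,k\}$ of pairwise distinct degrees $0,1,\dots,k$, any collection with distinct degrees is linearly independent, so that evaluation at $m\in\Z_+$ separates them. First I would fix $i_0$ and $j_0$ and aim to isolate $v_{i_0,j_0}$ modulo $P_1$.

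The first step is to eliminate the dependence on $i$. For each fixed $i$, set $w_i(m)=\sum_{j=0}^k f_{i,j}(m)v_{i,j}\in P$; the hypothesis reads $\sum_{i=1}^s \lambda_i^m w_i(m)\in P_1$ for all $m\in\Z_+$. Since the $\lambda_i$ are pairwise distinct and nonzero, I would apply a finite-difference / Vandermonde argument: pick $s$ consecutive values $m, m+1,\dots, m+s-1$, obtaining a linear system whose coefficient matrix is $(\lambda_i^{m}\lambda_i^{\ell})_{\ell, i}$, i.e. $\mathrm{diag}(\lambda_i^m)$ times a Vandermonde matrix in the $\lambda_i$, which is invertible. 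Solving the system expresses each $w_i(m)$ as a $\C$-linear combination of the vectors $\sum_i\lambda_i^{m+\ell}w_i(m+\ell)\in P_1$ for $\ell=0,\dots,s-1$ — but one must be careful, since $w_i$ depends on $m$. The clean way is instead: the vector-valued function $m\mapsto \sum_i\lambda_i^m w_i(m)$ lands in $P_1$; because $w_i(m)$ has polynomial-in-$m$ coordinates (in any basis extending a basis of $P_1$, working one coordinate at a time reduces to scalars), the classical statement that $\sum_i \lambda_i^m p_i(m)=0$ for all large $m$ with $p_i$ polynomials and $\lambda_i$ distinct forces each $p_i\equiv 0$ applies verbatim to conclude $w_i(m)\in P_1$ for every $i$ and every $m\in\Z_+$. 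Thus $\sum_{j=0}^k f_{i,j}(m)v_{i,j}\in P_1$ for each fixed $i$ and all $m\in\Z_+$.

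The second step handles a single $i$. Now I have a polynomial identity $\sum_{j=0}^k f_{i,j}(m)v_{i,j}\in P_1$ for all $m\in\Z_+$, with $\deg f_{i,j}=j$. Evaluating at $m=0,1,\dots,k$ gives a $(k+1)\times(k+1)$ linear system with matrix $\big(f_{i,j}(\ell)\big)_{\ell,j}$; since the $f_{i,j}$ have strictly increasing degrees $0,1,\dots,k$, this matrix is, after column operations converting $\{f_{i,j}\}$ to the monomial basis $\{1,t,\dots,t^k\}$ (an upper-triangular change of basis with nonzero diagonal), a genuine Vandermonde matrix in $0,1,\dots,k$, hence invertible. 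Inverting, each $v_{i,j}$ is a $\C$-linear combination of the vectors $\sum_{j}f_{i,j}(\ell)v_{i,j}\in P_1$, so $v_{i,j}\in P_1$ for all $j$. Running this over all $i$ finishes the proof.

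\textbf{Main obstacle.} The only genuinely delicate point is passing from ``$\sum_i\lambda_i^m w_i(m)\in P_1$'' to ``$w_i(m)\in P_1$'' when $P$ is an arbitrary (possibly infinite-dimensional) vector space and $w_i(m)$ varies with $m$: one cannot literally invert a single Vandermonde system because the unknowns move. The fix is to observe that it suffices to prove the scalar statement — work modulo $P_1$ in the quotient $P/P_1$, choose a basis, and test against each coordinate functional, so everything reduces to the standard fact about $\C$-valued exponential polynomials $\sum_i\lambda_i^m p_i(m)$ vanishing for all $m\in\Z_+$. Once that reduction is made explicit, both steps are routine Vandermonde arguments; I expect no further difficulty.
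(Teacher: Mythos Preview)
Your argument is correct. The reduction to the quotient $P/P_1$ and then to a scalar exponential--polynomial identity $\sum_i \lambda_i^m p_i(m)=0$ (with $p_i\in\C[t]$ and the $\lambda_i$ pairwise distinct in $\C^*$) is exactly the right manoeuvre, and once there the classical linear-independence of the functions $m\mapsto \lambda_i^m m^j$ gives $p_i\equiv 0$; your second step is then a clean Vandermonde inversion. The only cosmetic remark is that you need not invoke a basis of the whole quotient: since only the finitely many vectors $v_{i,j}$ are involved, their images span a finite-dimensional subspace of $P/P_1$, and working there already suffices.

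As for comparison with the paper: there is nothing to compare. The paper does not prove this lemma at all; it simply quotes it as \cite[Proposition~7]{LZ1}. Your write-up therefore supplies a self-contained proof where the paper gives only a citation, and the approach you take is essentially the standard one used in the cited source.
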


Throughout the rest of  this section,  we always assume all $\mathscr V\!$-modules of the form $\Phi(\lambda,\alpha,h)$  are  irreducible (cf.~Theorem \ref{th-irr}).

\begin{theo}\label{th1-s5}
Let $ \lambda_i,\alpha_i\in\C^*,$ $h_i(t)\in\C[t]$ for $i=1,2,\ldots, n$ such that $\lambda_i$ are pairwise distinct and  ${\rm deg}\, h_i(t)=1$ for all $i.$ Let $V$  be an irreducible $\mathscr V\!$-module for which there exists $R_V\in\Z_+$ such that  $L_k$ for $k\geq R_V$ are locally finite on $V.$ Then
 \begin{itemize} \item[\rm(i)]  $\bigotimes_{i=1}^n\Phi(\lambda_i,\alpha_i,h_i)$ is  irreducible as a  $\mathscr V\!_r$-module for any  $r\in\Z_+;$
 \item[\rm(ii)]the tensor module $\bigotimes_{i=1}^n\Phi(\lambda_i,\alpha_i,h_i)\otimes V$ is an irreducible $\mathscr V\!$-module$;$
\item[\rm(iii)]in particular$,$ $\bigotimes_{i=1}^n\Phi(\lambda_i,\alpha_i,h_i)$ is an irreducible $\mathscr V\!$-module$.$

\end{itemize}\end{theo}
\begin{proof} Set $T=\bigotimes_{i=1}^n\Phi(\lambda_i,\alpha_i,h_i)$ and $\eta_i=\frac{h_i(t)-h_i(\alpha_i)}{t-\alpha_i}\in \C^*$ for $i=1,2,\ldots,n$. Take any $r\in\Z_+$ and $0\neq w\in T$.   Let $T_w$ be the $\mathscr V\!_r$-submodule of $T$ generated by $w$. Write $$w=\mbox{$\sum\limits_{I=(i_1,\ldots,i_n)\in \Gamma_1,\, J=(j_1,\ldots, j_n)\in\Gamma_2}$} a_{I J} s^{i_1}t^{j_1}\otimes s^{i_2}t^{j_2}\otimes\cdots\otimes s^{i_n}t^{j_n},$$
for some finite subsets $\Gamma_1,\Gamma_2\subset\Z_+^n$, $a_{IJ}\in\C^*$  for all $I\in\Gamma_1, J\in\Gamma_2$ such that all terms in the sum are linearly independent.
\begin{clai*}\label{c-l-m-01}
For any $1\le k\le n,\,j\in\Z_+,$ we have $$w_{kj}:=\mbox{$\sum\limits_{I, J} $} a_{I J l}s^{i_1}t^{j_1}\otimes \cdots\otimes S^js^{i_k}t^{j_k}\otimes \cdots\otimes s^{i_n}t^{j_n}\in T_w.$$
\end{clai*}
Then \eqref{ammel}, for any $m\geq r$,
\begin{eqnarray*}
T_w\ni L_mu&=&\mbox{$\sum\limits_{I, J}\sum\limits_{k=1}^n$}  a_{I J}s^{i_1}t^{j_1}\otimes \cdots\otimes \mbox{$\sum\limits_{j=0}^\infty$}\lambda_k^m (-m)^jS^js^{i_k}t^{j_k}\otimes \cdots\otimes s^{i_n}t^{j_n}\\
&=&\mbox{$\sum\limits_{k=1}^n\sum\limits_{j=0}^\infty$}\lambda_k^m(-m)^j w_{kj}.
\end{eqnarray*} Now applying Lemma \ref{LZ1-cited-s5} we see that $w_{k,j}\in T_w$ for any $k,j$, proving the claim.

 For a fixed  $k$, let $(i_k^0, j_k^0)$ be maximal in  the alphabetical order on $\Z_+^2$ among all $(i_k, j_k)$, where $i_k$ (resp. $j_k$) is  the $k$-th  component of $I$ (resp. $J$) and $I, J$ range respectively over $\Gamma_1, \Gamma_2$.     Then $$w_{k, i^0_k+2}=-\alpha_k\sum_{ I=(i_1, \ldots, i_k=i_k^0,\ldots, i_n)
 , J}  a_{I J }s^{i_1}t^{j_1}\otimes \cdots\otimes (\eta_k-\partial_t)t^{j_k}\otimes \cdots\otimes s^{i_n}t^{j_n},$$ and applying $L_m$ to $w_{k, i^0_k+2}$ one has   $$\mbox{$\sum\limits_{ I=(i_1, \ldots, i_k=i_k^0,\ldots, i_n)
 ,J}$}  a_{I J}s^{i_1}t^{j_1}\otimes \cdots\otimes (\eta_k-\partial_t)^2t^{j_k}\otimes \cdots\otimes s^{i_n}t^{j_n}\in T_w.$$ It follows from these that $$\mbox{$\sum\limits_{I=(i_1, \ldots, i_k=i_k^0,\ldots, i_n)
 ,J}$}  a_{I J }s^{i_1}t^{j_1}\otimes \cdots\otimes (\eta_k-\partial_t)j_kt^{j_k-1}\otimes \cdots\otimes s^{i_n}t^{j_n}\in T_w.$$ Repeating the above procedure finitely many times (if necessary) gives
$$0\neq \mbox{$\sum\limits_{ I=(i_1, \cdots, i_k=i_k^0,\ldots, i_n)
, J=(j_1,..., j_k=j_k^0,..., j_n)
}$}  a_{I J }s^{i_1}t^{j_1}\otimes \cdots\otimes 1\otimes \cdots\otimes s^{i_n}t^{j_n}\in T_w.$$
Doing this for the other factors yields  ${\bf 1}_n\in T_w$ (cf. Notation \ref{rem-s5}).
Note from the claim above that all elements obtaining by repeatedly applying $S^j\ (j\in\Z_+)$ to any factor  of ${\bf 1}_n$ lie in $T_w$.  Then Lemma \ref{lemm-1} and  the irreducibility of $\Phi(\lambda_i,\alpha_i,h_i)$, $T\subseteq T_w$. That is, $T$ is irreducible, i.e., we have (i).

Let $M$ be any nonzero $\mathscr V\!$-submodule of $T\otimes V.$ Take any $0\neq u=\sum_{i=1}^m w_i\otimes v_i\in M$ with $w_i\in T$ and $ v_i\in V$ such that $\{w_i\otimes v_i\mid i=1,2,\ldots, m\}$ is linearly independent. Let $K={\rm max}\{K_{v_i}\mid i=1,2,\ldots, m\}$ (cf. Notation \ref{rem-s5}).  Viewing $M$ as a $\mathscr V\!_K$-module and  following from the proof of (i) we  can first obtain ${\bf 1}_n\otimes v\in M$ for some $0\neq v\in V$ and then  $T\otimes \C v\subseteq M$. Thus, $T\otimes V\subseteq M$, as $V$ is irreducible. That is, $T\otimes V$ is an irreducible $\mathscr V\!$-module, i.e., we have (ii). (iii) follows from (i),  or (ii) by taking $V=L(0,0)=\C$ (the trivial $\mathscr V\!$-module).
\end{proof}

\begin{theo}\label{th2-s5}Let $j=1,2$. Let $ \lambda^{(j)}_i,\alpha^{(j)}_i\in\C^*,$ $h^{(j)}_i(t)\in\C[t]$ such that  $\lambda_i^{(j)}$ are pairwise distinct for the fixed $j$ and   ${\rm deg}\, h^{(j)}_i(t)=1.$ Let $V_j$ be an irreducible $\mathscr V\!$-module for which there exists $R_j\in\Z_+$ such that all $L_k$ for $k\geq R_j$ are locally finite
on $V_j$ for 
$j=1,2$.  Then for any $m, n\in\Z_+\setminus\{0\},$
 $$\bigotimes_{i=1}^{m}\Phi(\lambda^{(1)}_i,\alpha^{(1)}_i,h^{(1)}_i)\otimes V_1\cong  \bigotimes_{i=1}^{n}\Phi(\lambda^{(2)}_i,\alpha^{(2)}_i,h^{(2)}_i)\otimes V_2$$ as $\mathscr V\!$-modules if and only if $$ m=n, \ \ \  V_1\cong V_2\  as\ \mathscr V\!\text-modules\ \ and \ \ (\lambda^{(1)}_i,\alpha^{(1)}_i\eta^{(1)}_i)=(\lambda^{(2)}_{\sigma ( i)},\alpha^{(2)}_{\sigma i}\eta^{(2)}_{\sigma i})$$  for some $\sigma\in S_{m}$\ $($the $m$-th symmetric group$),$  where $\eta^{(j)}_i=\frac{h^{(j)}_i(t)-h^{(j)}_i(\alpha_i^{(j)})}{t-\alpha_i^{(j)}}\in \C^*$.  In particular$,$  $\Phi(\lambda,\alpha,h)\cong \Phi(\lambda^\prime, \alpha, h^\prime)$ as $\mathscr V\!$-modules if and only if $(\lambda, \alpha h)=(\lambda^\prime, \alpha^\prime h^\prime)$.
\end{theo}

\begin{proof}The second statement follows from a special case of the first one: $$m=n=1\ {\rm and}\  V_1=V_2=L(0,0)=\C.$$
So it suffices to show the first statement.    Suppose  $$\phi: \bigotimes_{i=1}^{m}\Phi(\lambda^{(1)}_i,\alpha^{(1)}_i,h^{(1)}_i)\otimes V_1\rightarrow \bigotimes_{i=1}^{n}\Phi(\lambda^{(2)}_i,\alpha^{(2)}_i,h^{(2)}_i)\otimes V_2$$ is an isomorphism of $\mathscr V\!$-modules.

\setcounter{clai}{0}\begin{clai}\label{cl-s5}We have
$m=n$ and there exist a $\sigma\in S_{m}$ and a linear map $\tau: V_1\rightarrow V_2$ such that $(\lambda^{(1)}_i,\alpha^{(1)}_i\eta^{(1)}_i)=(\lambda^{(2)}_{\sigma (i)}, \alpha^{(2)}_{\sigma (i)}\eta^{(2)}_{\sigma (i)})$ for all $1\le i\le m$  and $\phi({\bf 1}_{m}\otimes v)={\bf 1}_{m}\otimes \tau(v)$ for $v\in V_1$ $($cf. Notation \ref{rem-s5}$)$. 

\end{clai}
To prove the claim, take any fixed   $0\neq u\in V_1$ and write \begin{equation}\label{new-eq-a}\phi({\bf1}_m\otimes u)= \mbox{$\sum\limits_{l=1}^p$}f_{1l}\otimes f_{2l}\otimes \cdots\otimes f_{n l}\otimes u_l,\end{equation}for some $f_{li}\in \Phi(\lambda^{(2)}_i,\alpha^{(2)}_i,h^{(2)}_i)$ and $u_l\in V_2$.    Set $K={\rm max}\{K_u, K_{u_l}\mid 1\le l\le p\}.$ By \eqref{ammel}, for any $q\geq K$,
\begin{eqnarray*}
\!\!\!\!\!\!\!\!&&\mbox{$\sum\limits_{k=1}^{m}\sum\limits_{j=0}^2$}(\lambda_k^{(1)})^q(-q)^j \phi({\bf 1}_{k-1}\otimes S^j1\otimes 1\otimes \cdots\otimes 1\otimes u)=\phi\big(L_q({\bf1}_m\otimes u)\big)\\
\!\!\!\!\!\!\!\!&=&L_q\phi({\bf1}_m\otimes u)=\mbox{$\sum\limits_{l=1}^p\sum\limits_{k=1}^{n}\sum\limits_{j=0}^\infty$} (\lambda_k^{(2)})^q(-q)^j
f_{1l}\otimes \cdots\otimes f_{k-1,l}\otimes
S^jf_{kl}\otimes f_{{k+1}l}\otimes \cdots\otimes f_{n l}\otimes u_l.
\end{eqnarray*}
In view of Lemma \ref{LZ1-cited-s5}, we must have $m=n$ and furthermore we may assume $\lambda^{(1)}_i=\lambda^{(2)}_{ i}$ for all $1\le i\le m$ by {re-ordering} $\lambda^{(2)}_{ 1},\ldots, \lambda^{(2)}_{m}$ if necessary; moreover, $f_{kl}\in\C[t]$ for any $k,l$. In addition,  for $0\le j\le2$, we have $$\phi({\bf 1}_{k-1}\otimes S^j1\otimes 1\otimes\cdots\otimes 1\otimes u)=\mbox{$\sum\limits_{l=1}^p$}
f_{1l}\otimes\cdots\otimes f_{k-1,l}\otimes
S^jf_{kl}\otimes f_{{k+1}l}\otimes \cdots\otimes f_{m l}\otimes u_l.$$
  By this,  the fact  $``S^21=-\alpha_k^{(1)}\eta_k^{(1)}"$ 
  and  \eqref{new-eq-a}, we  have, 
\begin{equation}\label{XXXX}\mbox{$\!\!\!\!\!\sum\limits_{l=1}^p
f_{1l}\otimes\cdots\otimes f_{k-1,l}\otimes
S^2f_{kl}\otimes f_{{k+1}l}\otimes \cdots\otimes f_{m l}\otimes u_l=-\alpha_k^{(1)}\eta_k^{(1)}\sum\limits_{l=1}^pf_{1l}\otimes \cdots\otimes f_{m l}\otimes u_l.\!\!$}\end{equation}
To complete  the proof of the claim, it suffices to  show  $\alpha^{(1)}_k\eta^{(1)}_k=\alpha^{(2)}_k\eta^{(2)}_k$  and $f_{kl} \ (1\le l\le p)$ can be chosen to be  $\delta_{1l}$ for $1\le k\le m$. For convenience, we only show this for $k=1$. Note that  \eqref{new-eq-a} can be rewritten as   $$\phi({\bf1}_m\otimes u)=\mbox{$\sum\limits_{l=1}^{p^\prime}$}t^{d_l}\otimes w_l,$$
for some $w_l\in \bigotimes_{i=2}^{m}\Phi(\lambda^{(2)}_i,\alpha^{(2)}_i,h^{(2)}_i)\otimes V_2$ and some $p'\le p$ and some $d_l$ 
such that $d_1<d_2<\cdots< d_{p'}
$.
Then using this expression,    the special case  of \eqref{XXXX} with $k=1$ turns out to be \begin{equation}\label{yy-1=-}\mbox{$\sum\limits_{l=1}^{p^\prime}\alpha_1^{(2)}(\eta_1^{(2)}t^{d_l}-d_lt^{d_l-1})\otimes w_l=\alpha_1^{(1)}\eta_1^{(1)}\sum\limits_{l=1}^{p^\prime}t^{d_l}\otimes w_l.$}\end{equation}  We conclude that $d_1=0$ and $\alpha^{(1)}_1\eta^{(1)}_1=\alpha^{(2)}_1\eta^{(2)}_1$.  These together with \eqref{yy-1=-} imply 
\begin{equation*}\mbox{$\sum\limits_{l=2}^{p^\prime}\alpha_1^{(2)}(\eta_1^{(2)}t^{d_l}-d_lt^{d_l-1})\otimes w_l=\alpha_1^{(1)}\eta_1^{(1)}\sum\limits_{l=2}^{p^\prime}t^{d_l}\otimes w_l,$}\end{equation*}
 from which  we can similarly conclude $d_2=0,$  contradicting   $d_1<d_2$. Thus, $p'=1.$  To sum up, we have obtained $\alpha^{(1)}_1\eta^{(1)}_1=\alpha^{(2)}_1\eta^{(2)}_1$ and $\phi({\bf 1}_m\otimes u)=1\otimes w_1.$ This, in fact, is the case with $k=1$.  

\begin{clai}
The linear map $\tau: V_1\rightarrow V_2$ is an isomorphism of $\mathscr V\!$-modules.
\end{clai}  Take any $v\in V_1$.
Note that $\phi\big((\mathfrak b{\bf 1}_m)\otimes v\big)=(\mathfrak b{\bf 1}_m)\otimes \tau(v)$ for all $\mathfrak b\in U(\mathscr V\!_{K^\prime})\ ({\rm where}\ K^\prime={\rm max}\{K_v,K_{\tau(v)}\})$  and $\bigotimes_{i=1}^{m}\Phi(\lambda^{(1)}_i,\alpha^{(1)}_i,h^{(1)}_i)$ is an irreducible $\mathscr V\!_{K^\prime}$-module (cf. Theorem \ref{th1-s5}\,(i)), $\phi\big(({\mathfrak b}{\bf1}_m)\otimes v\big)=({\mathfrak b}{\bf1}_m)\otimes \tau(v)$ for all ${\mathfrak b}\in U(\mathscr V\!)$. In particular, $\phi\big((L_q{\bf1}_m)\otimes v\big)=(L_q{\bf1}_m)\otimes \tau(v)$  for all $q\in\Z$. It follows from this and $\phi\big(L_q({\bf1}_m\otimes v)\big)=L_q\big({\bf1}_m\otimes \tau(v)\big)$  we can easily deduce that $\tau(L_q v)=L_q\tau(v)$ for all $q\in\Z$. That is, $\tau$ is a homomorphism. Clearly, $\tau$ is a linear isomorphism. Thus, $\tau$ is an isomorphism of  $\mathscr V\!$-modules.

Conversely, suppose $m=n,$ $\tau^\prime : V_1\rightarrow V_2$ is an isomorphism of $\mathscr V\!$-modules,  and there exists $\sigma\in  S_m$ such that $(\lambda^{(1)}_i,\alpha^{(1)}_i\eta^{(1)}_i)=(\lambda^{(2)}_{\sigma ( i)},\alpha^{(2)}_{\sigma i}\eta^{(2)}_{\sigma i})$ for all $1\le i\le m.$ Then the linear map sending ${\bf 1}_m\otimes v$  to ${\bf 1}_m \otimes \tau^\prime(v)$ for $v\in V_1$  can be  uniquely extended to an isomorphism $\phi^\prime: \bigotimes_{i=1}^{m}\Phi(\lambda^{(1)}_i,\alpha^{(1)}_i,h^{(1)}_i)\otimes V_1\rightarrow \bigotimes_{i=1}^{m}\Phi(\lambda^{(2)}_i,\alpha^{(2)}_i,h^{(2)}_i)\otimes V_2$ of $\mathscr V\!$-modules.   This completes the  proof.
\end{proof}

\begin{theo}\label{th3-s5}Let $ \lambda_i,\alpha_i\in\C^*,$ $h_i(t)\in\C[t]$ such that the $\lambda_i$ are pairwise distinct and ${\rm deg}\, h_i(t)=1$  for $1\leq i\leq n$ and $V$  an irreducible $\mathscr V\!$-module for which there exists $R_V\in\Z_+$ such that all $L_k$ for $k\geq R_V$ are locally finite on $V$.
 Then $\bigotimes_{i=1}^n \Phi(\lambda_i,\alpha_i,h_i)\otimes V$ is not isomorphic to any of the following irreducible $\mathscr V\!$-modules$:$

   $$V^\prime, \mathcal N\big(M, \gamma(t)\big), A_{b^\prime} \  {\rm or}\  \bigotimes_{i=1}^m \Omega(\mu_i, b_i)\otimes V^\prime ,$$ where  $1\le m\in\Z_+, b^\prime, 1\neq b_i\in\C, \mu_i\in \C^*,$ $\gamma(t)\in \C[t]\setminus\C,$  $M$ is an irreducible $\mathfrak a_r$-module$,$ for $V^\prime$  there exists $R_{V^\prime}\in \Z_+$ such that $L_i$   for $i\geq R_{V^\prime}$ are all locally finite on $V^\prime$.
\end{theo}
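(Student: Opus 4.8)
The plan is to tell $X$ apart from every listed module by looking at the action of $L_m$ for $m\gg0$, which on $X$ is forced into a rigid exponential-polynomial shape.

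First I would analyze $X$. As each $\Phi(\lambda_i,\alpha_i,h_i)$ is simple with $\alpha_i\neq0$ and $\deg h_i(t)=1$, Theorem \ref{th-irr} puts us in the case $b=-1$, so \eqref{sb=-1} applies and $g_i(t)=\frac{h_i(t)-h_i(\alpha_i)}{t-\alpha_i}$ is a nonzero constant. From \eqref{sb=-1}, if $0\neq f=\sum_p s^pf_p(t)$ has leading term $s^qf_q(t)$ then the $s^q$-coefficient of $S^2f$ is $-\alpha_i\bigl(g_if_q-f_q'\bigr)\neq0$ (since $g_if_q=f_q'$ forces $f_q=0$); hence $S^2$ is injective on $\Phi(\lambda_i,\alpha_i,h_i)$. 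Now take $0\neq u\in X$, write $u=\sum_l w_l\otimes v_l$ (finite, $w_l\in\bigotimes_i\Phi(\lambda_i,\alpha_i,h_i)$, $v_l\in V$), put $K=\max_l K_{v_l}$ (Remark \ref{rem-s5}), and use the coproduct rule $L_m(a_1\otimes\cdots\otimes a_{n+1})=\sum_r a_1\otimes\cdots\otimes L_ma_r\otimes\cdots\otimes a_{n+1}$, the fact that $L_m$ acts on the $i$-th factor as $\lambda_i^m\sum_{j\ge0}(-m)^jS^j$ (Lemma \ref{lemm-1}), and $L_mv_l=0$ for $m\ge K$, to obtain for all $m\ge K$
$$L_mu=\sum_{k=1}^n\lambda_k^m\,P_k(m),\qquad P_k(m)=\sum_{j\ge0}(-m)^j\,\Theta_k^{(j)}(u),$$
where $\Theta_k^{(j)}=\mathrm{id}^{\otimes(k-1)}\otimes S^j\otimes\mathrm{id}^{\otimes(n-k)}\otimes\mathrm{id}_V$ on $X$. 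Each $P_k$ is an $X$-valued polynomial in $m$ of degree $\max\{j:\Theta_k^{(j)}(u)\neq0\}<\infty$; moreover $\Theta_k^{(2)}$ is a tensor product of injective linear maps over $\C$, hence injective, so $\Theta_k^{(2)}(u)\neq0$ and $\deg P_k\ge2$. Thus $P_k\not\equiv0$ and $\deg P_k\ge2$ for every $k$.

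Next I would record the invariants. Call a nonzero element $w$ of a $\mathfrak V$-module \emph{tame} if $L_mw=\sum_{\nu\in\Sigma}\nu^mQ_\nu(m)$ for all $m\gg0$, for some finite $\Sigma\subset\C^*$ and nonzero vector-valued polynomials $Q_\nu$ (allowing $\Sigma=\emptyset$, i.e.\ $L_mw=0$ for $m\gg0$), and set $\delta(w)=\min_{\nu\in\Sigma}\deg Q_\nu\in\Z_+\cup\{\infty\}$. Since the functions $m\mapsto\nu^m m^j$ are linearly independent over distinct $\nu\in\C^*$ and $j\in\Z_+$ (Lemma \ref{LZ1-cited-s5}), such an expansion is unique, so tameness, $\Sigma$ and $\delta$ are invariant under $\mathfrak V$-module isomorphisms (a linear isomorphism $\phi$ turns $\sum_\nu\nu^mQ_\nu(m)$ into $\sum_\nu\nu^m(\phi\circ Q_\nu)(m)$ and preserves polynomial degrees). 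The first paragraph says precisely: \emph{every} nonzero $u\in X$ is tame with $\Sigma=\{\lambda_1,\dots,\lambda_n\}$ and $2\le\delta(u)<\infty$.

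It then remains to check that each listed module violates this. For $V'$ one has $L_iV'=0$ for $i\ge R_{V'}$, so every nonzero element is tame with $\delta=\infty$; hence $X\not\cong V'$. For $A_{b'}$, writing $w=\sum_{i,l}c_{i,l}t^i(t\partial_t)^l$ and using $L_nw=t^n\bigl((t\partial_t)w+nb'w\bigr)$ together with $[t\partial_t,t]=t$, one checks that $L_nw\neq0$ for every $n$ and that the $t$-exponents occurring in $L_nw$ lie in a window of fixed width sliding to $+\infty$ with $n$; so the $L_nw$ are linearly independent for infinitely many $n$ and no nonzero $w$ is tame. For $\mathcal N(M,\gamma(t))$, writing $x=\sum_k m_k\otimes t^k$ and using $\gamma(t)\notin\C$, the top $t$-degree of $L_nx$ is $n+\max\{k:m_k\neq0\}+\deg\gamma$ with nonzero coefficient, so again no nonzero $x$ is tame; thus $X$ is isomorphic to neither. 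For $Y:=\bigotimes_{i=1}^m\Omega(\mu_i,b_i)\otimes V'$, take $0\neq w_0=1\otimes\cdots\otimes1\otimes v'$ with $v'\in V'$; since $L_n1=\mu_i^n\bigl(t+n(b_i-1)\bigr)$ in $\Omega(\mu_i,b_i)$ and $L_nv'=0$ for $n\ge R_{V'}$, for $n\gg0$ we get $L_nw_0=\sum_{i=1}^m\mu_i^n\bigl(1\otimes\cdots\otimes(t+n(b_i-1))\otimes\cdots\otimes1\otimes v'\bigr)$, so $w_0$ is tame with all its coefficient polynomials of degree $\le1$, i.e.\ $\delta(w_0)\le1$. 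An isomorphism $\phi\colon X\to Y$ would make $\phi^{-1}(w_0)$ a nonzero tame element of $X$ with $\delta=\delta(w_0)\le1$, contradicting $\delta\ge2$ on $X$. I expect this last comparison to be the main obstacle: unlike $V'$, $A_{b'}$ and $\mathcal N(M,\gamma(t))$, for $Y$ the span $\mathrm{span}\{L_mw:m\gg0\}$ is also finite-dimensional for every $w$ and $Y$ has the same number of exponents as $X$, so only the refined datum $\delta$ separates them, and the decisive point is the injectivity of $S^2$ on $\Phi(\lambda,\alpha,h)$ (whereas the analogous operator on $\Omega(\mu,b)$ has only two terms and kills the constants, giving $\delta\le1$).
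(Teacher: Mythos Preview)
Your proof is correct and takes a genuinely different route from the paper's. The paper disposes of $V'$, $A_{b'}$ and $\mathcal N(M,\gamma(t))$ by citing an external reference (\cite[Corollary~4]{TZ1}), and for $\bigotimes_i\Omega(\mu_i,b_i)\otimes V'$ it invokes the machinery of the proof of Theorem~\ref{th2-s5} to force a factor-wise isomorphism $\Phi(\lambda_1,\alpha_1,h_1)\cong\Omega(\mu_i,b_i)$, which is then ruled out by comparing ranks as free $U(L_0)$-modules. You instead package everything into a single numerical invariant $\delta(w)$ attached to any ``tame'' vector $w$, namely the minimal degree among the polynomial coefficients in the unique exponential--polynomial expansion of $L_m w$ for $m\gg0$ (uniqueness coming from Lemma~\ref{LZ1-cited-s5}).

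The key new ingredient you supply, which does not appear explicitly in the paper, is the injectivity of $S^2$ on each $\Phi(\lambda_i,\alpha_i,h_i)$; this is what forces $\delta\ge2$ on \emph{every} nonzero element of $X$. By contrast, on $\Omega(\mu,b)$ one has $L_m1=\mu^m\bigl(t+m(b-1)\bigr)$, so the analogue of $S^2$ annihilates $1$, and hence $\delta(\mathbf{1}\otimes v')\le1$ in $Y$. Your argument is more self-contained and treats all four families on the same footing. It also sharpens the comparison with $Y$: the dimension count used in Claim~\ref{cl-s5} of Theorem~\ref{th2-s5} gives $\dim V^K_{\mathbf{1}\otimes u}=3n$ for $X$ versus $2m$ for $Y$, which by itself does not separate them (one could have $3n=2m$), so the paper has to push further into the factor-matching part of that proof before invoking ranks; your degree invariant settles it in one stroke.
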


\begin{proof} Denote $\bigotimes_{i=1}^n \Phi(\lambda_i,\alpha_i,h_i)\otimes V$ by $X$. The proof of $X\ncong \bigotimes_{i=1}^m \Omega(\mu_i, b_i)\otimes V^\prime,$ in fact,  follows  from that of Theorem \ref{th2-s5}. Here we give the detail. Suppose  $\phi: X\rightarrow \bigotimes_{i=1}^m \Omega(\mu_i, b_i)\otimes V^\prime$ is an isomorphism of $\mathscr V\!$-modules.  Take any    $0\neq u\in V$ and write \begin{equation}\label{new-eq-ab}\phi({\bf1}_n\otimes u)= \sum_{l\in I}f_{1l}\otimes f_{2l}\otimes \cdots\otimes f_{m l}\otimes u_l\quad ({\rm finite\ sum}).\end{equation}    Set $K={\rm max}\{K_u, K_{u_l}\mid l\in I\}.$ Then by \eqref{ammel} and \eqref{lll-dda}, for any $p\geq K$,
\begin{eqnarray*}
&&\sum_{k=1}^{n}\sum_{j=0}^2\lambda_k^p(-p)^j \phi({\bf 1}_{k-1}\otimes S^j1\otimes 1\otimes\cdots\otimes 1\otimes u)=\phi\big(L_p({\bf1}\otimes u)\big)\\
&=&L_p\phi({\bf1}\otimes u)=\sum_{l}\sum_{k=1}^{m}\underbrace{f_{1l}\otimes \cdots\otimes}_{k-1} \mu_k^p\big(t+p(b_k-1)\big)f_{kl}(t-p)\otimes f_{{k+1}l}\otimes \cdots\otimes f_{m l}\otimes u_l.
\end{eqnarray*} By Lemma \ref{LZ1-cited-s5},  we have $n=m$ and may assume $\lambda_i=\mu_i$ for $1\le i\leq n$; moreover, \begin{eqnarray}\label{*-0}
\sum_{j=0}^2(-p)^j \phi( S^j1\otimes 1\cdots\otimes 1\otimes u)=\sum_{l} \big(t+p(b_1-1)\big)f_{1l}(t-p)\otimes f_{{2}l}\otimes \cdots\otimes f_{m l}\otimes u_l.
\end{eqnarray}  Choose $f_{1,l}$  to be of the form $t^{d_l}$  such that $d_i\neq d_j$ for $i\neq j$  and write \begin{equation}\label{***-1}\phi({\bf 1}_n\otimes u)=\sum_{l=1}^kt^{d_l}\otimes w_l\end{equation}   for   some  $w_l\in \bigotimes_{i=2}^m \Omega(\mu_i, b_i)\otimes V^\prime.$
Using this expression,  \eqref{*-0} turns out to be
$$\sum_{j=0}^2(-p)^j \phi( S^j1\otimes 1\otimes\cdots\otimes 1\otimes u)=\sum_{l=1}^k \big(t+p(b_1-1)\big)(t-p)^{d_l}\otimes w_l.$$
By  Lemma \ref{LZ1-cited-s5}  and comparing the coefficients of $(-p)^2$,   we see that $(k,d_1)=(1,1)$  and  $\phi(S^21\otimes1\otimes \cdots\otimes 1\otimes u)=(1-b_1)1\otimes w_1.$      The former and \eqref{***-1} imply $\phi({\bf 1}\otimes u)=t\otimes w_1;$ the latter and $S^21\in\C^*$ imply $\phi({\bf 1}\otimes u)\in \C \otimes w_1.$  This is a contradiction.

Next we only prove $X\ncong \mathcal N\big(M, \gamma(t)\big)$ and the rest can be shown similarly. Suppose $\varphi: X\rightarrow \mathcal N\big(M, \gamma(t)\big)$ is an   isomorphism of $\mathscr V\!$-modules. Let $I=\{\lambda_i\mid i=1,2,\ldots, m\}\cup\{\lambda_0=1\}$. Take any nonzero $u\in V$.  Note that,  by \eqref{sb=-1},  $0\neq S^{r+1}s^{r+1}\otimes1\otimes\cdots\otimes 1\otimes u$ and also that, by \eqref{ammel} and \eqref{mag},      \begin{eqnarray*}0&=&\varphi\big(L_m(s^{r+1}\otimes1\otimes\cdots\otimes 1\otimes u)\big)-L_m\varphi(s^{r+1}\otimes1\otimes\cdots\otimes  1\otimes u)\\
&=&\lambda_1^m(-m)^{r+1}\varphi(S^{r+1}s^{r+1}\otimes1\otimes\cdots\otimes 1\otimes u)+\sum_{\lambda_i\in I}\sum_{j=0}^r \lambda_i^m(-m)^jv_{ij}\end{eqnarray*}  for some $v_{ij}\in  \mathcal N\big(M, \gamma(t)\big)$. Then by Lemma \ref{LZ1-cited-s5}, $\varphi(S^{r+1}s^{r+1}\otimes1\otimes\cdots\otimes 1\otimes u)=0$, contradicting the injectivity of $\varphi$. Thus, $X\ncong \mathcal N\big(M, \gamma(t)\big)$.
\end{proof}


\noindent{\bf \Large Acknowledgments}

This work was partially supported by NSFC grant no. 11971350.

\small

\def\NOUSE#1{}\NOUSE{\small 
}

\cl{}  School of Mathematical Sciences, Tongji University, Shanghai
200092, China.

\vspace{.2cm}
 E-mail: jzhan@tongji.edu.cn

 \vspace{.1cm}
 E-mail: ycsu@tongji.edu.cn

\end{document}